\documentclass[10pt, leqno]{article}
\usepackage[utf8]{inputenc}

\usepackage{amsmath,amssymb,amsthm, epsfig}

\usepackage{amsfonts}

\usepackage{hyperref}

\usepackage{amsmath}

\usepackage{color}

\usepackage{ulem}

\usepackage{mathrsfs}

\usepackage{wasysym}

\usepackage{stackrel}

\title{\Large{\bf  
Sharp regularity for degenerate fully nonlinear equations with oblique boundary conditions and Hamiltonian terms}}
\author{\it by \smallskip \\ Junior da Silva Bessa \footnote{\noindent Universidade Estadual de Campinas - UNICAMP. Instituto de Matem\'{a}tica, Estat\'{i}stica e Computa\c{c}\~{a}o Cient\'{i}fica - IMECC. Departamento  de Matem\'{a}tica. Bar\~{a}o Geraldo, Campinas - SP, Brazil. \noindent \texttt{E-mail address: \url{jbessa@unicamp.br}}}\quad \&\quad
	Gleydson C. Ricarte \footnote{\noindent Universidade Federal Cear\'{a}. Department of Mathematics. Fortaleza, CE-Brazil 60455-760. \noindent \texttt{E-mail address: \url{ricarte@mat.ufc.br}}}
}
\date{}
\newlength{\hchng}
\newlength{\vchng}
\def \supp {\mathrm{supp } }

\newcommand{\defeq}{\mathrel{\mathop:}=}

\newtheorem{theorem}{Theorem}[section]

\newtheorem{lemma}[theorem]{Lemma}

\newtheorem{corollary}[theorem]{Corollary}

\theoremstyle{definition}

\newtheorem{definition}[theorem]{Definition}

\newtheorem{example}[theorem]{Example}

\theoremstyle{remark}

\newtheorem{remark}[theorem]{Remark}

\numberwithin{equation}{section}

\newcommand{\intav}[1]{\mathchoice {\mathop{\vrule width 6pt height 3 pt depth  -2.5pt
\kern -8pt \intop}\nolimits_{\kern -6pt#1}} {\mathop{\vrule width
5pt height 3  pt depth -2.6pt \kern -6pt \intop}\nolimits_{#1}}
{\mathop{\vrule width 5pt height 3 pt depth -2.6pt \kern -6pt
\intop}\nolimits_{#1}} {\mathop{\vrule width 5pt height 3 pt depth
-2.6pt \kern -6pt \intop}\nolimits_{#1}}}

\begin{document}
\maketitle

\begin{abstract}
\noindent We prove optimal boundary  $C^{1,\alpha}$
 regularity for viscosity solutions of degenerate fully nonlinear uniformly elliptic equations with oblique boundary conditions and Hamiltonian terms of the form
\[
\begin{cases}
|Du|^{\gamma}F(D^2 u) + \varrho(x)|Du|^{\sigma} = f(x) & \text{in } \Omega,\\
\beta(x)\cdot Du+\zeta(x)u = g(x) & \text{on } \partial \Omega,
\end{cases}
\]
where $\gamma>0$ and $0<\sigma\le 1+\gamma$. We develop a compactness framework for affine translations, linking the size of the translation to the Hamiltonian structure. This is combined with a boundary improvement-of-flatness argument adapted to oblique boundary data, yielding the optimal boundary regularity.

\medskip
\noindent \textbf{Keywords:} Fully nonlinear elliptic equations, oblique boundary conditions, optimal regularity, degenerate models, Hamilton-Jacobi-type equations.

\vspace{0.2cm}

\noindent \textbf{AMS Subject Classification:} 	35J70, 35J25, 35J60, 35B65.
\end{abstract}
%\tableofcontents

\section{Introduction}

\hspace{0.4cm} This paper is concerned with regularity results up to the boundary for viscosity solutions to a degenerate fully nonlinear elliptic equation with an oblique boundary condition
\begin{equation}\label{1.1}
\left\{
\begin{array}{rclcl}
\mathcal{G}(D^{2}u,Du,x)&=& f(x)& \mbox{in} &   \Omega,  \\
\mathcal{B}(Du,u,x)&=& g(x) &\mbox{on}& \partial \Omega,
\end{array}
\right.	
\end{equation}
where $\Omega \subset \mathbb{R}^n$ is a bounded $C^1$ domain. Here, the operator $\mathcal{G}:\mathrm{Sym}(n)\times \mathbb{R}^{n}\times \Omega\to \mathbb{R}$ (where $\mathrm{Sym}(n)$ denotes the space of $n\times n$ real symmetric matrices) is defined by
\begin{equation}\label{1.2}
\mathcal{G}(\mathrm{M},\vec{\xi},x)\defeq |\vec\xi|^{\gamma}F(\mathrm{M}) + \varrho(x)|\vec{\xi}|^{\sigma},
\end{equation}
where $F=F(\mathrm{M})$ is uniformly elliptic (cf. the structural condition $\mathbf{(H1)}$), $\varrho$ is a bounded function, $\gamma>0$, and $0<\sigma\leq 1+\gamma$.  This model endows the equation with a term that degenerates in the region where $\{|Du|=0\}$, while at the same time incorporating a Hamiltonian term of the form
\[
\mathcal{H}(x,\vec{\xi})=\varrho(x)|\vec{\xi}|^{\sigma},
\]
which exhibits sublinear-to-linear homogeneity with respect to the gradient variable. This feature requires a delicate analysis of viscosity solutions and their regularity for such models. The study of problems associated with the operator $\mathcal{G}$ arises naturally in the optimal stochastic control problems \cite{BirDemLeo19Erg,LarLio89} and as a generalization of Hamilton--Jacobi equations \cite{ArmTra15,JesPimUrb23}.

The operator $\mathcal{B}:\mathbb{R}^{n}\times\mathbb{R}\times \partial \Omega \longrightarrow \mathbb{R}$ is defined by
\begin{equation*}
\mathcal{B}(\vec{\xi},s,x)\defeq \beta(x)\cdot \vec{\xi}+\zeta(x)s,
\end{equation*}
where $\zeta$ is a continuous real-valued function. The vector field $\beta$ is assumed to be continuous and to satisfy the obliqueness condition. More precisely, we assume that there exists a constant $\mu_{0}>0$ such that
\begin{equation}\label{1.3}
\beta\cdot \vec{\mathbf{n}}\geq \mu_{0}\quad\text{and}\quad \|\beta\|_{L^{\infty}(\partial\Omega)}\leq 1,
\end{equation}
where $\vec{\mathbf{n}}$ denotes the inner unit normal vector to $\partial \Omega$. In this case, we refer to $\mathcal{B}$ as an \textit{oblique boundary operator}.

Under suitable regularity assumptions on the boundary data $\beta$, $\zeta$, and $g$, as well as on the source term $f$ (see the structural hypotheses {\bf(H1)-(H3)}), we establish optimal $C^{1,\alpha}$ regularity up to the boundary for solutions to problem \eqref{1.1}.

The interest in studying models with oblique boundary conditions is naturally motivated by the modeling of several phenomena, such as reflected shocks in transonic flow \cite{CKL00}, the theory of capillary problems \cite{ESY96,FinnLuli}, Brownian motion \cite{EFH,RW}, stochastic optimal control of reflected diffusion processes \cite{LioTru86}, and gravity field determination from boundary observations \cite{DDO06}. Regarding H\"older regularity results for this type of problem, we refer to \cite{BesdaSRic26,ChoKim23,DonLi20,DonPan21,LiZhang} for a modern and non-exhaustive list of related works.

The regularity theory for degenerate fully nonlinear equations with Hamiltonian terms has attracted considerable attention. We begin by recalling the seminal works of Birindelli and Demengel, who established $C^{1,\alpha}$ regularity results for models with linear Hamiltonian terms \cite{BirindelliDemengel14} (see also  \cite{BezdaSil25} and \cite{Ndaw23} ) and for degenerate/singular models with sublinear-to-linear Hamiltonian terms \cite{BirindelliDemengel16} (see also \cite{BirDemLeo}). These authors, together with Leoni, extended these results to operators with superlinear terms of at most subquadratic growth \cite{BirDemLeo19}. More recently, Andrade and Nascimento \cite{PT} studied interior estimates for viscosity solutions to
\begin{equation}\label{1.4}
\Phi(x,|Du|)F(x,D^{2}u)+h(x)|Du|^{m}=f(x),
\end{equation}
under the assumption that $\Phi(x,|t|)\approx |t|^{\gamma}$ for some $\gamma>0$ and $0<m\leq 1+\gamma$. The authors established local H\"older regularity of solutions via the Ishii--Lions method and geometric tangential approach, and obtained optimal $C^{1,\alpha}$ regularity for solutions to this problem.

In contrast with the literature mentioned above, when one considers problems with oblique boundary conditions, advances have been obtained only in the absence of Hamiltonian terms. When $\varrho\equiv 0$ in \eqref{1.2}, boundary $C^{1,\alpha}$ regularity results were established by Banerjee and Verma \cite{BanerjeeVerma} in the setting of Robin boundary conditions (that is, when $\beta=\vec{\mathbf{n}}$), and optimal regularity in this framework was later obtained by Ricarte \cite{Ricarte}. Still in the absence of Hamiltonian terms, Byun, Kim, and Oh proved $C^{1,\alpha}$ regularity for degenerate models with oblique boundary conditions. Concerning this problem, Bessa, Ricarte, and Silva \cite{BesRicSil26} extended this result for the optimal gradient regularity theory to the degenerate oblique setting under relaxed convexity assumptions. Bessa and Oh \cite{BesOh26} extended this latter result to the setting of two-phase problems of the form
$$
\left\{
\begin{array}{rclcl}
 (|Du|^{p}+\mathfrak{a}(x)|Du|^{q})F(D^2 u)&=& f(x)& \mbox{in} &   \Omega  \\
 \beta(x)\cdot Du+\zeta(x) u&=& g(x) &\mbox{on}& \partial \Omega,
\end{array}
\right.
$$
where $0<p\leq q$ and $\mathfrak{a}$ is a nonnegative function. We emphasize that, in their approach, a compactness result was established for the model translated by affine profiles by analyzing the size of the translation, which in turn yields optimal estimates for this model. The authors also derived several applications, including the analysis of the case when the operator $F$ is nearly convex/nearly concave and estimates in the homogeneous setting, that is, when the source term vanishes. More recently, Byun, Kim, and Kim further extended the estimates in \cite{BKK26} to the following problem
$$
\left\{
\begin{array}{rclcl}
\Phi(x,|Du|)F(D^{2}u)&=& f(x)& \mbox{in} &   \Omega  \\
 \beta(x)\cdot Du+\zeta(x) u&=& g(x) &\mbox{on}& \partial \Omega,
\end{array}
\right.
$$
with $\Phi$ exhibiting a singular/degenerate behavior. In this case, they also carried out an analysis of the translated problem by controlling the size of the translation and the balance of the function $\Phi$ with respect to its second variable. 

Finally, despite the substantial progress achieved in the literature, there remains a gap in the regularity theory for degenerate models with Hamiltonian terms under oblique boundary conditions. Indeed, interior and boundary regularity results are available for degenerate equations with Hamiltonian terms \cite{PT,BirindelliDemengel14,BezdaSil25}, while degenerate problems with oblique boundary conditions have been addressed in \cite{BesOh26,BKK26,BKO}. However, to the best of our knowledge, no results are available when both features are present simultaneously. This paper aims to bridge this gap.

In pursuit of our goal, the analytical tools are inspired by \cite{PT} and \cite{BKO}, particularly regarding the compactness analysis of solutions via translations and the control of such translations together with the Hamiltonian term. However, our approach requires a finer analysis of the operator $\mathcal{G}$ when performing affine translations, in combination with the oblique boundary data. With this in mind, our strategy follows the outline below:
\begin{itemize}
\item[I.] First, we establish H\"{o}lder regularity for solutions to problem \eqref{1.1} translated by a vector $\vec{q}\in\mathbb{R}$ and in the case where the operator $\mathcal{B}$ does not depend on the second variable (Theorem \ref{Holderregtransl}).
\item[II.] Under suitable smallness assumptions on the data, using geometric tangential methods, we ensure that the solutions to the problem in Step~I are close to the solutions of a uniformly elliptic homogeneous equation with constant oblique boundary condition (Theorem \ref{stab}).
\item[III.] Finally, we employ an improvement of flatness result (Lemma \ref{key 1}) and provide a proof of our first main theorem (Theorem \ref{main}), which yields the optimal regularity in the case where the absorption term $\zeta(x)u$ vanishes. Building upon this result, and using a suitable translation of the source term in the boundary datum, we obtain the desired regularity for the general model \eqref{1.1} (Corollary \ref{main1}).  
\end{itemize}

The paper is divided into three further sections. In Section \ref{Section2}, we introduce the basic notation, present the main results, and collect some preliminary material. Section \ref{Section3} is concerned with the compactness of solutions to problem \eqref{1.1} under affine translations. Finally, Section \ref{Section4} is devoted to the development of our approach and to the proofs of Theorem \ref{main} and Corollary \ref{main1}.

\section{Preliminaries and Main results}\label{Section2}

We begin by introducing the notation that will be used throughout the paper:
\begin{itemize}
\item For $r>0$ we write $\mathrm{B}_r=\{|x|< r\}$ the ball with center $0$ and radius $r$.
\item Given $r>0$, $\mathrm{B}^+_r = \mathrm{B}_r \cap \{x_n>0\}$ the half-ball and $\mathrm{T}_r = \mathrm{B}_r \cap \{x_n=0\}$ the flat boundary of $\mathrm{B}^+_r$.
\item For $x_0 \in \mathbb{R}^n$ we write $\mathrm{B}_r(x_0)=x_0 + \mathrm{B}_r$the ball with center $x_0$ and radius $r>0$.
\item For a domain $\Omega \subset \mathbb{R}^n$ and $x_0 \in \mathbb{R}^n$ we write $\Omega_r = \Omega \cap \mathrm{B}_r$, $\partial \Omega_r = \partial \Omega \cap \mathrm{B}_r$ and $\Omega_r(x_0)=\Omega \cap \mathrm{B}_r(x_0)$, $\partial \Omega_r(x_0) =\partial \Omega \cap \mathrm{B}_r(x_0)$.
\end{itemize}

Throughout this paper, we will assume the following structural conditions on the
problem \eqref{1.1}:
\begin{enumerate}
\item [\bf(H1)]\label{HypA1} ({\bf Ellipticity}) We assume that $F:\text{Sym}(n)\longrightarrow\mathbb{R}$ is a uniformly elliptic operator, i.e., there exist positive constants $\lambda \le \Lambda$ such that
\begin{equation*}\label{unfellip}
\mathcal{M}^{-}_{\lambda,\Lambda}(\mathrm{X}-\mathrm{Y})  \le F(\mathrm{X})-F(\mathrm{Y}) \le \mathcal{M}^{+}_{\lambda, \Lambda}(\mathrm{X}-\mathrm{Y})
\end{equation*}
for all $\mathrm{X}, \mathrm{Y} \in \textrm{Sym}(n)$. Here, $\mathcal{M}^{\pm}_{\lambda,\Lambda}$ are the Pucci extremal operators, defined by
\begin{equation*}
\mathcal{M}^{+}_{\lambda,\Lambda}(\mathrm{X})=\Lambda \sum_{e_{i}>0}e_{i}+\lambda \sum_{e_{i}<0}e_{i}\quad \text{and}\quad \mathcal{M}^{-}_{\lambda,\Lambda}(\mathrm{X})=\lambda \sum_{e_{i}>0}e_{i}+\Lambda \sum_{e_{i}<0}e_{i}
\end{equation*}
where $e_{i}=e_{i}(\mathrm{X})$ ($1\leq i\leq n$) are the eigenvalues of the matrix $\mathrm{X}$. Moreover, for normalization purposes, we shall always assume that $F(0)=0$, which is not restrictive. 
	
\item[\bf (H2)] ({\bf Regularity of the data}) \, The data satisfy $\varrho,f \in C^0(\Omega)\cap L^{\infty}(\Omega)$, $\zeta, g \in C^{0,\alpha}(\partial\Omega)$, and $\beta \in C^{0,\alpha}(\partial \Omega; \mathbb{R}^n)$ for some $\alpha\in (0,\alpha_{0}]$, where $\alpha_{0}\in (0,1]$ is the exponent from the $C^{1,\alpha_0}$ regularity theory for the homogeneous equation with constant oblique boundary condition of the form
$$
\left\{
\begin{array}{rclcl}
 F(D^2 v)  &=& 0& \mbox{in} &   \mathrm{B}^{+}_{1}  \\
 \beta_0\cdot D v &=& 0 &\mbox{on}& \mathrm{T}_1,
\end{array}
\right.	
$$	
where $\beta_0$ is a constant oblique vector satisfying \eqref{1.3}, with the estimate
$$
\|v\|_{C^{1,\alpha_0}(\overline{\mathrm{B}^{+}_{1/2}})} \le \mathrm{C}_{\star} \|v\|_{L^{\infty}(\mathrm{B}^+_1)}
$$
for some constant $\mathrm{C}_{\star} =\mathrm{C}_{\star}(n,\lambda,\Lambda, \mu_0)>0$.
\end{enumerate}
\begin{remark}\label{Remark2.1}
Regarding the exponent $\alpha_{0}$ in the structural condition {\bf (H2)}, D. Li and K. Zhang \cite[Theorem 1.2]{LiZhang} proved that it indeed exists under these assumptions, and that $\alpha_{0}=1$ when $F$ is concave or convex \cite[Theorem 1.3]{LiZhang}. Bessa, Ricarte, and Silva \cite[Proposition 5.1]{BesRicSil26} showed that $\alpha_{0}=1$ also holds for a more general class of operators, namely, quasiconvex/quasiconcave (see \eqref{defquasi}) ones.
\end{remark}

With this in mind, we can state our first main result.
\begin{theorem}\label{main}
Let $\Omega \subset \mathbb{R}^{n}$ be a bounded $C^{1}$ domain, $0 \in \partial \Omega_1$ and $\sigma,\gamma$ constants such that $0<\sigma \le 1+\gamma$. Suppose that $u$ is a viscosity solution of
$$
\left\{
\begin{array}{rclcl}
\mathcal{G}(D^{2}u,Du,x)&=& f(x)& \mbox{in} &   \Omega_1  \\
\mathcal{B}(Du,0,x)&=& g(x) &\mbox{on}& \partial \Omega_1.
\end{array}
\right.	
$$
Assume that the structural conditions {\bf (H1)-(H2)} are valid. Then $u \in C^{1,\alpha^{\prime}}(\overline{\Omega_{1/2}})$ for 
\begin{equation*}
\alpha^{\prime}=\min\left\{\alpha_{0}^{-},\frac{1}{1+\gamma}\right\}.
\end{equation*}
Moreover, the following estimates hold: 
\begin{enumerate}
\item[a)]For $\sigma < 1+\gamma$,
$$
\|u\|_{C^{1,\alpha^{\prime}}(\overline{\Omega_{1/2}})} \le \mathrm{C} \left( \|u\|_{L^{\infty}(\overline{\Omega_1})} + \|\varrho\|^{\frac{1}{1+\gamma-\sigma}}_{L^{\infty}(\overline{\Omega_1})} +\|f\|^{\frac{1}{1+\gamma}}_{L^{\infty}(\overline{\Omega_1})} + \|g\|_{C^{0,\alpha}(\partial \Omega_1)} \right)
$$
where $\mathrm{C}>0$ depends only on $n,\lambda,\Lambda,\alpha, \gamma,\sigma, \mu_0, [\beta]_{C^{0,\alpha}(\partial \Omega_1)}$, $\|\zeta\|_{C^{0,\alpha}(\partial \Omega_{1})}$ and $C^1$ modulus of $\partial \Omega_1$.
\item[b)] For $\sigma=1+\gamma$,
$$
\|u\|_{C^{1,\alpha^{\prime}}(\overline{\Omega_{1/2}})} \le \overline{\mathrm{C}} \cdot \left( \|u\|_{L^{\infty}(\overline{\Omega_1})}+ \|f\|_{L^{\infty}(\overline{\Omega_{1}})}^{\frac{1}{1+\gamma}} + \|g\|_{C^{0,\alpha}(\partial \Omega_1)}\right)
$$
where $\overline{\mathrm{C}}>0$ depends additionally on  $\|\varrho\|_{L^{\infty}(\overline{\Omega_1})}$.
\end{enumerate}
\end{theorem}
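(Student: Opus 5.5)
Our plan is the standard geometric tangential scheme, in the order outlined in the Introduction. \emph{Step 0 (reduction).} Flatten the boundary with a $C^1$ change of variables, so that we may work in $\mathrm{B}_1^+$ with oblique data on $\mathrm{T}_1$ and obliqueness constant comparable to $\mu_0$. Subtract the affine function $\ell_0(x)=\frac{g(0)}{\beta_n(0)}x_n$ so that $g(0)=0$. Then normalize by $v(x)=u(rx)/\kappa$. Under this scaling the equation becomes
\[
|Dv|^\gamma F_\kappa(D^2v)+\tilde\varrho|Dv|^\sigma=\tilde f ,
\]
with
\[
\tilde f=\frac{r^{2+\gamma}}{\kappa^{1+\gamma}}f(rx),\qquad \tilde\varrho=\frac{r^{2+\gamma-\sigma}}{\kappa^{1+\gamma-\sigma}}\varrho(rx),\qquad \tilde g=\frac{r}{\kappa}g(rx).
\]
In case a) we choose $\kappa\simeq \|u\|_\infty+\|\varrho\|^{1/(1+\gamma-\sigma)}+\|f\|^{1/(1+\gamma)}+\|g\|_{C^{0,\alpha}}$. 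This makes $\|v\|_\infty\le1$ and makes $\|\tilde f\|_\infty,\|\tilde \varrho\|_\infty,[\tilde g]_{C^{0,\alpha}},[\beta]_{C^{0,\alpha}},$ and the flatness of the boundary all at most a prescribed $\varepsilon$, after taking $r$ small. In case b) $\sigma=1+\gamma$, so $\tilde\varrho=r\varrho$ regardless of $\kappa$. Smallness then comes only from $r$, which depends on $\|\varrho\|_\infty$; this is why $\overline{\mathrm C}$ depends on it.

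\emph{Step 1 (improvement of flatness, Lemma \ref{key 1}).} For smallness regimes as above, we claim there exist $\rho\in(0,1/2)$ and an affine $\ell(x)=a+\vec b\cdot x$ with $\beta(0)\cdot \vec b=0$, $|a|+|\vec b|\le C$, such that
\[
\sup_{\mathrm{B}^+_\rho}|v-\ell|\le \rho^{1+\alpha'}.
\]
We argue by the compactness Theorem \ref{stab}, which applies to the \emph{translated} problem $|D w+\vec q|^\gamma F(D^2w)+\tilde\varrho|Dw+\vec q|^\sigma=\tilde f$ with boundary condition $\beta\cdot Dw=\tilde g-\beta\cdot\vec q$, uniformly in $\vec q$. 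This uniformity rests on the equicontinuity of Theorem \ref{Holderregtransl}. The theorem yields a solution $h$ of $F(D^2h)=0$ with $\beta(0)\cdot Dh=0$ that is $\delta$-close to $v$. By (H2), $h$ is $C^{1,\alpha_0}$, so $|h-h(0)-Dh(0)\cdot x|\le C_\star\rho^{1+\alpha_0}$. We choose $\rho$ with $C_\star\rho^{1+\alpha_0}\le\frac12\rho^{1+\alpha'}$, which is possible since $\alpha'<\alpha_0$, and then $\delta=\frac12\rho^{1+\alpha'}$.

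\emph{Step 2 (iteration).} By induction we build affine $\ell_k$ with $\sup_{\mathrm{B}^+_{\rho^k}}|v-\ell_k|\le\rho^{k(1+\alpha')}$ and
\[
|a_{k+1}-a_k|\le C\rho^{k(1+\alpha')},\qquad |\vec b_{k+1}-\vec b_k|\le C\rho^{k\alpha'}.
\]
The rescaled function is
\[
v_k(x)=\frac{(v-\ell_k)(\rho^kx)}{\rho^{k(1+\alpha')}} .
\]
It solves the translated problem with $\vec q_k=\vec b_k\rho^{-k\alpha'}$, with boundary datum $\rho^{-k\alpha'}(\tilde g(\rho^kx)-\beta(\rho^k x)\cdot \vec b_k)$, and with source and Hamiltonian coefficients
\[
\rho^{k(1-\alpha'(1+\gamma))}\tilde f,\qquad \rho^{k(1+\gamma-\sigma)(1-\alpha'(1+\gamma))/(1+\gamma)\ \text{-type}}\,\tilde\varrho .
\]
Both remain $\le\varepsilon$ exactly because $\alpha'\le 1/(1+\gamma)$. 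The boundary datum stays small because $\tilde g(0)=0$, $\beta(0)\cdot\vec b_k\approx0$, and $\alpha'\le\alpha$. Summing the increments gives the $C^{1,\alpha'}$ estimate at boundary points. Combining it with the interior estimates of \cite{PT} by the usual covering argument gives the estimate in $\overline{\Omega_{1/2}}$.

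\emph{Main obstacle.} The main difficulty is that the Hamiltonian term after translation involves $|Dw+\vec q|^\sigma$ with $|\vec q_k|$ possibly unbounded, and the compactness Step 1 must hold uniformly in $\vec q$. We will split into two cases. When $|\vec q|$ is large, we divide by $|Dw+\vec q|^\gamma\gtrsim|\vec q|^\gamma$, so that $w$ solves a uniformly elliptic equation with a small right-hand side and a small gradient term. When $|\vec q|$ is bounded, we use the degenerate H\"older estimates of Theorem \ref{Holderregtransl} together with the cutting lemma for $|Dw+\vec q|^\gamma F(D^2 w)=$ small. We also have to keep the constraint $\beta(0)\cdot\vec b_k=0$ only approximately, since $\beta$ varies. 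To handle this we will absorb $[\beta]_{C^{0,\alpha}}|\vec b_k|\rho^{k\alpha}$ into the boundary smallness, using the uniform bound on $|\vec b_k|$ coming from the geometric series.
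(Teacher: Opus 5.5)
Your route is the paper's: normalize, subtract $\frac{g(0)}{\beta_n(0)}x_n$ (which produces the translated problem), obtain Lemma \ref{key 1} from the $\vec q$-compactness of Theorem \ref{stab}, and iterate with $\vec q_k\sim\rho^{-k\alpha'}\vec b_k$. Theorem \ref{stab} itself rests on Theorem \ref{Holderregtransl} and the cutting lemma, and uses exactly your bounded/unbounded split of $\vec q$. Two steps, however, do not work as written.

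\textbf{The flattening in Step 0.} You cannot flatten a $C^1$ boundary by a change of variables here. For $w=u\circ\Phi^{-1}$ the Hessian involves $D^2\Phi$, which does not exist when $\Phi$ is only $C^1$; equivalently, $C^2$ test functions for $w$ pull back to mere $C^1$ functions for $u$. Even for a $C^2$ map, $F$ would acquire $x$-dependence and a first-order term, leaving the class treated by Theorem \ref{stab}. The paper never flattens. It keeps the curved domain and rescales so that $\|\psi\|_{C^1(\mathrm{T}_1)}$ is small. This smallness is preserved along the iteration by $\psi_k(x')=\rho^{-k}\psi(\rho^kx')$, and $\Omega_k\to\mathrm{B}^+_{4/5}$ happens inside the compactness argument. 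Your own remark about making ``the flatness of the boundary'' small is the right mechanism. So drop the flattening and work in $\Omega_\rho$ rather than $\mathrm{B}^+_\rho$.

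\textbf{Uniformity in $\vec q$.} Theorems \ref{Holderregtransl}(b) and \ref{stab} are not uniform in $\vec q$ outright. They require the coupled smallness $\|\varrho\|_{L^\infty}\big(|\vec q|^{(\sigma-\gamma)_+}+1\big)<\epsilon$. This is precisely what makes your ``small gradient term'' true for $\sigma>\gamma$: after dividing by $|\vec q|^\gamma$ you are left with $\varrho|Dw+\vec q|^{\sigma-\gamma}\approx\varrho|\vec q|^{\sigma-\gamma}$. Since $|\vec q_k|\to\infty$, bounding $\varrho_k$ alone is not enough. The correct coefficient is $\varrho_k=\rho^{k[1-\alpha'(1+\gamma-\sigma)]}\tilde\varrho(\rho^k\cdot)$. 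The translation is $\vec q_k=\rho^{-k\alpha'}(\vec b_k+\vec q_0)$, where $\vec q_0$ is the translation created by subtracting $\ell_0$, which you dropped. Hence
\[
\|\varrho_k\|_{L^\infty}\,|\vec q_k|^{\sigma-\gamma}=\rho^{k(1-\alpha')}\,\|\tilde\varrho\|_{L^\infty}\,|\vec b_k+\vec q_0|^{\sigma-\gamma}.
\]
This stays below $\epsilon$ because $\alpha'<1$ and $|\vec b_k|+|\vec q_0|\le C(\mathrm{C}_\star,\rho,\mu_0)$. You also need the initial normalization to make $\|\tilde\varrho\|_{L^\infty}(C^{\sigma-\gamma}+1)$ small, via $\kappa$ in case a and via $r$ in case b. The paper's Step 2 leaves this check implicit as well.

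\textbf{A remark on the exponent.} You rightly use $\alpha'\le\alpha$ to keep $\rho^{-k\alpha'}\tilde g(\rho^k\cdot)$ and $\rho^{-k\alpha'}(\beta(\rho^k\cdot)-\beta(0))\cdot\vec b_k$ small. This is not implied by the stated hypotheses; the paper uses it silently too. It is genuinely needed, as harmonic functions with Neumann data exactly $C^{0,\alpha}$ show. So what the argument actually delivers is the exponent $\min\{\alpha,\alpha_0^-,\frac{1}{1+\gamma}\}$.
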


From Theorem \ref{main}, we obtain boundary regularity for the general model of the form \eqref{1.1}.

\begin{corollary}\label{main1}
Let $\Omega \subset \mathbb{R}^{n}$ be a bounded $C^{1}$ domain, $0 \in \partial \Omega_1$ and $\sigma,\gamma$ constants such that $0<\sigma \le 1+\gamma$. Suppose that $u$ is a viscosity solution of
\begin{equation}\label{2.1}
\left\{
\begin{array}{rclcl}
\mathcal{G}(D^{2}u,Du,x) &=& f(x)& \mbox{in} &   \Omega_1  \\
 \mathcal{B}(Du,u,x)&=& g(x) &\mbox{on}& \partial \Omega_1,
\end{array}
\right.	
\end{equation}
Assume that the structural conditions {\bf (H1)-(H2)} are valid. Then $u \in C^{1,\alpha^{\prime}}(\overline{\Omega_{1/2}})$ for 
\begin{equation*}
\alpha^{\prime}=\min\left\{\alpha_{0}^{-},\frac{1}{1+\gamma}\right\}.
\end{equation*}
Moreover, the following estimates hold: 
\begin{enumerate}
\item[a)]For $\sigma < 1+\gamma$,
$$
\|u\|_{C^{1,\alpha^{\prime}}(\overline{\Omega_{1/2}})} \le \mathrm{C} \cdot \left( \|u\|_{L^{\infty}(\overline{\Omega_1})} + \|\varrho\|^{\frac{1}{1+\gamma-\sigma}}_{L^{\infty}(\overline{\Omega_1})} +\|f\|^{\frac{1}{1+\gamma}}_{L^{\infty}(\overline{\Omega_1})} + \|g\|_{C^{0,\alpha}(\partial \Omega_1)} \right)
$$
where $\mathrm{C}>0$ depends only on $n,\lambda,\Lambda,\alpha, \gamma,\sigma, \mu_0, [\beta]_{C^{0,\alpha}(\partial \Omega_1)}, \|\zeta\|_{C^{0,\alpha}(\partial \Omega_{1})}$ and $C^1$ modulus of $\partial \Omega_1$.
\item[b)] For $\sigma=1+\gamma$,
$$
\|u\|_{C^{1,\alpha^{\prime}}(\overline{\Omega_{1/2}})} \le \overline{\mathrm{C}} \cdot \left( \|u\|_{L^{\infty}(\overline{\Omega_1})}  + \|f\|_{L^{\infty}(\overline{\Omega_{1}})}^{\frac{1}{1+\gamma}}+\|g\|_{C^{0,\alpha}(\partial \Omega_1)}\right)
$$
where $\overline{\mathrm{C}}>0$ depends additionally on $\|\varrho\|_{L^{\infty}(\Omega_1)}$.
\end{enumerate}	
\end{corollary}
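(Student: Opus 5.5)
We deduce Corollary \ref{main1} from Theorem \ref{main} by moving the absorption term $\zeta(x)u$ into the boundary datum. If $u$ solves \eqref{2.1}, then $u$ is also a viscosity solution of the problem in Theorem \ref{main} with the same $f$ and $\varrho$ and with boundary datum
$$
\tilde g(x)\defeq g(x)-\zeta(x)u(x),\qquad x\in\partial\Omega_1 .
$$
The first check is that the viscosity inequalities at boundary points transfer. At a touching point $x_0\in\partial\Omega_1$, the term $\zeta(x_0)u(x_0)$ equals $\zeta(x_0)\varphi(x_0)$ for the test function $\varphi$. Hence the relaxed condition $\min\{\mathcal G(D^2\varphi,D\varphi,x_0)-f(x_0),\ \beta\cdot D\varphi+\zeta\varphi-g\}\le 0$ (and its max counterpart) becomes exactly the corresponding condition with $\mathcal B(D\varphi,0,x_0)-\tilde g(x_0)$.

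Theorem \ref{main} needs $\tilde g\in C^{0,\alpha}(\partial\Omega_{r})$, so we must control the Hölder norm of $u$ on the boundary. We first use the Hölder regularity for the (translated) problem, Theorem \ref{Holderregtransl} with $\vec q=0$; this requires a boundary operator independent of $u$, which is why we apply it directly to the transformed problem with $\tilde g\in L^\infty$. It gives $u\in C^{0,\alpha_1}(\overline{\Omega_{3/4}})$ for some small $\alpha_1>0$, with
$$
[u]_{C^{0,\alpha_1}(\overline{\Omega_{3/4}})}\le C\big(\|u\|_{L^\infty}+\|\varrho\|^{\frac{1}{1+\gamma-\sigma}}_{L^\infty}+\|f\|^{\frac1{1+\gamma}}_{L^\infty}+\|g\|_{L^\infty}\big),
$$
where the term $\|\varrho\|^{\frac{1}{1+\gamma-\sigma}}_{L^\infty}$ is dropped, and $C$ depends on $\|\varrho\|_{L^\infty}$, when $\sigma=1+\gamma$. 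If $\alpha_1\ge\alpha$ we are done with this step. Otherwise we bootstrap:
\begin{itemize}
\item Theorem \ref{main} applied with $\alpha_1$ in place of $\alpha$ on a smaller ball, after scaling and covering, yields $u\in C^{1,\min\{\alpha_1,\dots\}}$.
\item In particular $u$ is Lipschitz there, so $\tilde g\in C^{0,\alpha}$ with
$$
\|\tilde g\|_{C^{0,\alpha}}\le \|g\|_{C^{0,\alpha}}+\|\zeta\|_{C^{0,\alpha}}\|u\|_{C^{0,1}}.
$$
\end{itemize}
Note that Theorem \ref{main} holds for any exponent in $(0,\alpha_0]$, with $\alpha'$ determined by $\alpha_0$ and $\gamma$, so only finitely many steps are needed, each with the same type of bound.

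Finally, we apply Theorem \ref{main} once more on $\Omega_{1/2}$, after a standard covering and scaling to pass from $\Omega_{3/4}$ or $\Omega_{5/8}$ to $\Omega_{1/2}$. The estimate for $\|\tilde g\|_{C^{0,\alpha}}$ is linear in $\|u\|_{L^\infty}$, $\|\varrho\|^{\frac{1}{1+\gamma-\sigma}}$, $\|f\|^{\frac1{1+\gamma}}$ and $\|g\|_{C^{0,\alpha}}$, with constants depending on $\|\zeta\|_{C^{0,\alpha}}$. Substituting it gives estimates a) and b).

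The main obstacle is this intermediate Hölder/Lipschitz step on the boundary. The constants must stay uniform with respect to $\|\zeta\|_{C^{0,\alpha}}$, and the scaling used in the covering argument must preserve the structure of the equation: rescaling $u$ changes $\varrho$, $f$, $g$ in a way compatible with the homogeneities $1+\gamma-\sigma$ and $1+\gamma$. We handle this exactly as in the normalization already used for Theorem \ref{main}.
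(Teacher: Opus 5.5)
Your proposal is correct and follows the paper's route. Like the paper, you absorb $\zeta u$ into the boundary datum $g-\zeta u$, use Theorem~\ref{Holderregtransl} to get H\"older continuity of $u$ up to the boundary, and then apply Theorem~\ref{main}. Your intermediate bootstrap is a genuine improvement in rigor: you first apply Theorem~\ref{main} with the small exponent $\alpha_1$ to get a Lipschitz bound on $u$, so that $g-\zeta u\in C^{0,\alpha}$ with a linear estimate. The paper instead simply asserts $u\in C^{0,\alpha}$ from Theorem~\ref{Holderregtransl}, which in fact only gives some small universal exponent.
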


An interesting feature of the results above is that the $C^{1,\alpha}$ regularity exponent arises from the competition between two a priori independent quantities: the regularity exponent for the homogeneous problem with constant oblique boundary condition, and the value $\frac{1}{1+\gamma}$, which comes from the intrinsic scaling of the equation due to the sublinear-linear homogeneity regime of the operator $\mathcal{G}$.

The structural conditions above ensure that our results generalize the $C^{1,\alpha}$ regularity results for degenerate models with oblique boundary conditions \cite{BesRicSil26,BKO, Ricarte} and can be regarded as an extension of the estimates established only for fully nonlinear degenerate elliptic equations with Hamiltonian terms in \cite{PT,BirindelliDemengel14,BirindelliDemengel16,BirDemLeo,Ndaw23} and optimal estimates as in \cite{ART15}.

\begin{example}[\bf Sharpness]
Now we illustrate the optimality of the exponent $\alpha^{\prime}$ in the above results. 
Let $\phi \in C_{0}^{\infty}(\mathbb{R}_{+})$ be such that $\phi|_{[0,1]} \equiv 0$ and $\supp(\phi)\subset [0,2]$. 
Consider the bounded $C^{1}$ domain
\[
\Omega=\{x\in \mathrm{B}_{2};\ x_{n}>-\phi(|x|)\},
\]
and define $u(x)=|x|^{1+\alpha}$. Then $u$ is a viscosity solution to
\begin{equation*}
\left\{
\begin{array}{rclcl}
|Du|^{\gamma}\Delta u+\varrho(x)|Du|^{\sigma} &=& f(x) & \mbox{in} & \Omega_1=\mathrm{B}^{+}_{1},  \\[4pt]
u_{\vec{\mathbf{n}}}+\zeta(x)u &=& g(x) & \mbox{on} & \partial \Omega_1=\mathrm{T}_{1},
\end{array}
\right.	
\end{equation*}
where $0
<\sigma\leq 1+\alpha$, $\varrho\equiv 1$, and $\zeta\equiv 1$. Moreover,
\[
f(x)\lesssim \mathrm{C}(n,\alpha,\gamma,\sigma)
\Big(|x|^{\alpha(1+\gamma)-1}+|x|^{\alpha\sigma}\Big),
\]
and
\[
g(x)=\frac{(1+\alpha)\phi'(|x'|)}{\sqrt{1+(\phi'(|x'|))^{2}}}|x^{\prime}|^{\alpha}
+|x'|^{1+\alpha}.
\]
In particular, if $\alpha=\frac{1}{1+\gamma}$, then $f\in L^{\infty}(\Omega_1)$ and $u$ is not more regular than $C^{1,\alpha}$.
\end{example}

An immediate consequence of Corollary \ref{main1} is the regularity analysis under the quasiconvexity/quasiconcavity regime of the governing operator. Recall that an operator $F:\mathrm{Sym}(n)\to\mathbb{R}$ is said to be \textit{quasiconvex} (respectively, \textit{quasiconcave}) if, for every $c\in\mathbb{R}$, the sublevel (resp., superlevel) set
\begin{equation}\label{defquasi}
\{\mathrm{M}\in \mathrm{Sym}(n): F(\mathrm{M})\le c\}\,\, (\text{resp., } \{\mathrm{M}\in \mathrm{Sym}(n): F(\mathrm{M})\ge c\})
\end{equation}
is convex. 

We note that convex (respectively, concave) operators are quasiconvex (respectively, quasiconcave). However, this class is strictly larger. For instance, the operator
\[
F(\mathrm{M}):=\sum_{i=1}^{n}\arctan\big(e_i(\mathrm{M})\big), \qquad\mathrm{M}\in \mathrm{Sym}(n),
\]
which arises in the special Lagrangian equation, is quasiconcave (in the supercritical phase), but it is not concave in general (cf. \cite{Gof24}).

In this class, we have the following consequence.
\begin{corollary}
Let $u$ be a viscosity solution to \eqref{2.1}. Assume the hypotheses of Corollary \ref{main1} and, in addition, that $F$ is a quasiconvex/quasiconcave operator. Then, $u\in C^{1,\frac{1}{1+\gamma}}(\overline{\Omega_{1/2}})$ and the following estimates hold:
\begin{enumerate}
\item[a)]For $\sigma < 1+\gamma$,
$$
\|u\|_{C^{1,\frac{1}{1+\gamma}}(\overline{\Omega_{1/2}})} \le \mathrm{C} \left( \|u\|_{L^{\infty}(\overline{\Omega_1})} + \|\varrho\|^{\frac{1}{1+\gamma-\sigma}}_{L^{\infty}(\overline{\Omega_1})} +\|f\|^{\frac{1}{1+\gamma}}_{L^{\infty}(\overline{\Omega_1})} + \|g\|_{C^{0,\alpha}(\partial \Omega_1)} \right)
$$
where $\mathrm{C}>0$ depends only on $n,\lambda,\Lambda,\alpha, \gamma, \sigma,\mu_0, [\beta]_{C^{0,\alpha}(\partial \Omega_1)}$ and $C^1$ modulus of $\partial \Omega_1$.
\item[b)] For $\sigma=1+\gamma$,
$$
\|u\|_{C^{1,\frac{1}{1+\gamma}}(\overline{\Omega_{1/2}})} \le \overline{\mathrm{C}} \cdot \left( \|u\|_{L^{\infty}(\overline{\Omega_1})}  +\|f\|^{\frac{1}{1+\gamma}}_{L^{\infty}(\overline{\Omega_1})}+ \|g\|_{C^{0,\alpha}(\partial \Omega_1)}\right)
$$
where $\overline{\mathrm{C}}>0$ depends additionally on $\|\varrho\|_{L^{\infty}(\overline{\Omega_1})}$.
\end{enumerate}
\end{corollary}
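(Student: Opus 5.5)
This corollary follows by combining Corollary \ref{main1} with Remark \ref{Remark2.1}. By \cite[Proposition 5.1]{BesRicSil26}, when $F$ is quasiconvex or quasiconcave, the exponent in \textbf{(H2)} is $\alpha_0=1$. That is, solutions of the homogeneous problem $F(D^2v)=0$ in $\mathrm{B}^+_1$, $\beta_0\cdot Dv=0$ on $\mathrm{T}_1$, satisfy $\|v\|_{C^{1,1}(\overline{\mathrm{B}^+_{1/2}})}\le \mathrm{C}_\star\|v\|_{L^\infty(\mathrm{B}^+_1)}$. Here $\mathrm{C}_\star$ depends only on $n,\lambda,\Lambda,\mu_0$.

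The first step is to check that \textbf{(H2)} holds with $\alpha_0=1$ for any $\alpha\in(0,1]$. Then I would apply Corollary \ref{main1} to get $u\in C^{1,\alpha'}(\overline{\Omega_{1/2}})$ with $\alpha'=\min\{1^-,\frac{1}{1+\gamma}\}$.

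The only point needing care is the meaning of $\alpha_0^-$, i.e. any exponent strictly below $\alpha_0$. Since $\gamma>0$, we have $\frac{1}{1+\gamma}<1$. So we may pick $\alpha_0^-\in\big(\frac{1}{1+\gamma},1\big)$, for instance $\alpha_0^-=\frac12\big(1+\frac{1}{1+\gamma}\big)$. The minimum is then exactly $\frac{1}{1+\gamma}$, with no loss of $\epsilon$.

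For the constants, fixing $\alpha_0^-$ this way makes it depend only on $\gamma$. So the constants $\mathrm{C},\overline{\mathrm{C}}$ from Corollary \ref{main1} carry over, with $\mathrm{C}_\star$ now depending only on $(n,\lambda,\Lambda,\mu_0)$. Estimates a) and b) are then read off directly from Corollary \ref{main1}.

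One discrepancy needs attention. The listed dependence in a) omits $\|\zeta\|_{C^{0,\alpha}}$, even though Corollary \ref{main1} includes it. I would either keep that dependence implicitly, as inherited from Corollary \ref{main1}, or note that it is absorbed into the hypotheses of the corollary.

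The main point to verify is that the proof of Corollary \ref{main1} really allows any $\alpha_0^-<\alpha_0$ with uniform constants. In the improvement-of-flatness step (Lemma \ref{key 1}), the iteration uses $\rho^{\alpha_0}\mathrm{C}_\star\le\frac12\rho^{\alpha'}$, which holds for small $\rho$ as long as $\alpha'<\alpha_0$. With $\alpha_0=1$ and $\alpha'=\frac{1}{1+\gamma}<1$, this strict inequality holds, so the argument goes through unchanged.
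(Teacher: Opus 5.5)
Your proposal is correct and matches the paper's own argument: you invoke Remark \ref{Remark2.1} to get $\alpha_0=1$, use $\gamma>0$ to see that $\frac{1}{1+\gamma}<1$, so that $\alpha'=\min\{\alpha_0^-,\frac{1}{1+\gamma}\}=\frac{1}{1+\gamma}$, and read off the estimates from Corollary \ref{main1}. Your two additions are accurate refinements of the paper's one-line proof: fixing the sub-exponent strictly between $\frac{1}{1+\gamma}$ and $1$ (so it depends only on $\gamma$), and noting that the statement silently drops the dependence on $\|\zeta\|_{C^{0,\alpha}(\partial\Omega_1)}$ inherited from Corollary \ref{main1}.
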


\begin{proof}
The proof follows directly from the following observations. By Remark \ref{Remark2.1}, we know that $\alpha_{0}=1$, and since $\gamma>0$ it follows that
\[
\alpha^{\prime}=\min\left\{\alpha_{1},\frac{1}{1+\gamma}\right\}=\frac{1}{1+\gamma}.
\]
Therefore, the claimed regularity follows from Corollary \ref{main1}.
\end{proof}

Finalizing this section, we introduce the notion of viscosity solution for fully nonlinear equations with a double degeneracy law and oblique boundary condition:
\begin{equation}\label{model}
\left\{
\begin{array}{rcll}
\mathcal{G}(D^2u,Du,x) &=& f(x) & \mbox{in } \Omega, \\
\mathcal{B}(Du,u,x) &=& g(x) & \mbox{on } \Gamma\subset \partial \Omega,
\end{array}
\right.
\end{equation}
where $\Gamma$ is a relatively open subset of $\partial \Omega$.

\begin{definition}[\bf Viscosity solution]
Let $\mathcal{G}$ be continuous in all variables, and assume that $f\in C^{0}(\Omega)$. We say that a function $u \in C^{0}(\Omega\cup \Gamma)$ is a viscosity subsolution (resp. supersolution) to problem \eqref{model} if, for any $x_{0} \in \Omega\cup\Gamma$ and any test function $\varphi \in C^{2}(\Omega\cup\Gamma)$ such that $u - \varphi$ attains a local maximum (resp. minimum) at $x_{0}$, one has
\begin{equation*}
\left\{
\begin{array}{rcll}
 \mathcal{G}(D^{2}\varphi(x_{0}),D\varphi(x_{0}),x_{0}) &\leq\ (\geq)&  f(x_{0})& \mbox{if } x_{0}\in\Omega, \\
 \mathcal{B}(D\varphi(x_{0}),\varphi(x_{0}),x_{0})&\leq\ (\geq)& g(x_{0}) &\mbox{if } x_{0}\in\Gamma.
\end{array}
\right.
\end{equation*}
We say that $u$ is a viscosity solution of problem \eqref{model} if it is both a viscosity subsolution and a viscosity supersolution.
\end{definition}

\section{Translated problem: Compactness}\label{Section3}

Here we introduce a key tool for our approach, namely, the compactness of viscosity solutions to the translated problem under an affine perturbation.

To achieve this goal, throughout the statement of the forthcoming results and the remainder of the manuscript, we shall work in an ``almost $C^{1}$-flat'' domain $\Omega_{1}$. More precisely, as in \cite{BKO}, we assume that $0\in \partial\Omega_{1}$ and that the boundary $\partial\Omega_{1}$ can be locally described as the graph of a $C^{1}$ function (recall that, in our main results, $\Omega$ is a $C^{1}$ domain) $\psi=\psi_{\Omega}:\mathrm{T}_{1}\to\mathbb{R}$. Namely,
\begin{equation*}
\partial\Omega_{1}=\{(x',x_{n})\in \mathrm{B}_{1}\,:\,x_{n}=\psi(x')\}
\qquad \text{and}\qquad 
\{(x',x_{n})\in \mathrm{B}_{1}\,:\,x_{n}>\psi(x')\}\subset\Omega_{1}
\end{equation*}
Under this change of coordinates, we employ the usual H\"older space notation for the boundary data $\beta$, $\zeta$, and $g$. For instance, we write $g\in C^{0,\alpha}(\mathrm{T}_{1})$ and identify $g(x)$ with $g(x',\psi(x'))$.  

Observe that the inward unit normal vector to $\partial\Omega_{1}$ at the point $(x',\psi(x'))$ is given by
\begin{equation*}
\vec{\mathbf{n}}(x',\psi(x'))=
\frac{1}{\sqrt{1+|D\psi(x')|^{2}}}\,(-D\psi(x'),1).
\end{equation*}
Moreover, if $[\psi]_{C^{1}(\mathrm{T}_{1})}\leq \frac{\mu_{0}}{2}$, then the obliqueness condition $\beta\cdot \vec{\mathbf{n}}\geq \mu_{0}$ implies that $\beta_{n}\geq \frac{\mu_{0}}{2}$, where $\beta_{1},\ldots,\beta_{n}$ are the coordinates functions of the vector field $\beta$.

Therefore, instead of imposing the original oblique boundary condition, we shall always assume that the vector field $\beta$ and the function $\varphi$ satisfy
\begin{equation*}
\psi(0)=0,
\qquad 
[\psi]_{C^{1}(\mathrm{T}_{1})}\leq \frac{\mu_{0}}{2},
\qquad 
\beta_{n}\geq \frac{\mu_{0}}{2},
\qquad 
\text{and}
\qquad 
\|\beta\|_{L^{\infty}(\mathrm{T}_{1})}\leq 1.
\end{equation*}

After this preliminary remark, we return to our main focus. More precisely, let us consider the following problem translated by a vector $\vec{q}\in\mathbb{R}^{n}$:
\begin{equation} \label{Translated problem}
\left\{
\begin{array}{rclcl}
\mathcal{G}(D^{2}u,D u-\vec{q},x) &=& f(x) & \mbox{in} & \Omega_r, \\
\mathcal{B}(D u,0,x)&=& g(x) & \mbox{on} & \partial\Omega_{r},
\end{array}
\right.
\end{equation}
Our goal is to establish H\"older regularity for solutions to this problem, with estimates that are independent of the vector $\vec{q}$. For this, we introduce the following notation for \(0<\lambda\leq\Lambda\):
\begin{eqnarray*}
\mathcal{P}^{+}_{\lambda,\Lambda}(D^{2}u,Du)&=&\Lambda\operatorname{tr}(D^{2}u)^{+}-\lambda\operatorname{tr}(D^{2}u)^{-}+\Lambda|Du|,\\
\mathcal{P}^{-}_{\lambda,\Lambda}(D^{2}u,Du)&=&\lambda\operatorname{tr}(D^{2}u)^{+}-\Lambda\operatorname{tr}(D^{2}u)^{-}-\Lambda|Du|.
\end{eqnarray*}

Concerning compactness, we have the following result proved in \cite[Theorem 3.1]{BKO}.
\begin{theorem}\label{Holderreg}
Assume that \(\Omega\) is a \(C^{1}\)-domain. Then there exists a small \(\mu=\mu(\mu_{0})\leq \frac{\mu_{0}}{2}\) such that if \([\psi_{\Omega}]_{C^{1}}\leq \mu\), for any \(u\in C^{0}(\overline{\Omega_{1}})\) satisfying, in the viscosity sense,
\begin{equation}\label{3.3}
\left\{
\begin{array}{rclcl}
\mathcal{P}^{-}_{\lambda,\Lambda}(D^{2}u,Du) &\leq& \mathrm{C}_{0} & \mbox{in} & \{|Du-\vec{q}|>\theta\}\cap \Omega_1, \\
\mathcal{P}^{+}_{\lambda,\Lambda}(D^{2}u,Du) &\geq& -\mathrm{C}_{0} & \mbox{in} & \{|Du-\vec{q}|>\theta\}\cap \Omega_1, \\
\mathcal{B}(Du,0,x)&=& g(x) & \mbox{on} & \partial\Omega_{1},\\
\|u\|_{L^{\infty}(\Omega_{1})} &\leq& 1, &  & \\
\|g\|_{L^{\infty}(\mathrm{T}_{1})} &\leq& \mathrm{C}_{0}, &  & 
\end{array}
\right.
\end{equation}
for some \(0<\theta\leq 1\), then \(u\in C^{0,\alpha^{\prime}}(\overline{\Omega_{\frac{1}{2}}})\) for some \(\alpha^{\prime}(n,\lambda,\Lambda,\mu_{0})>0\). Moreover,
\begin{equation*}
\|u\|_{C^{0,\alpha^{\prime}}(\overline{\Omega_{\frac{1}{2}}})}\leq \mathrm{C}(n,\lambda,\Lambda,\mu_{0},\mathrm{C}_{0}),
\end{equation*}
where \(\mathrm{C}\) is a constant independent of \(\vec{q}\).
\end{theorem}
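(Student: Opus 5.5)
Theorem \ref{Holderreg} is taken from \cite[Theorem 3.1]{BKO}, but if I had to prove it I would use the Ishii--Lions doubling of variables, adapted to the oblique boundary, and make sure no constant depends on $\vec q$. The structure of \eqref{3.3} is that the Pucci-type inequalities hold only where $|Du-\vec q|>\theta$. So the argument has to close in two situations: either the test gradients are large, and then the equation is usable, or they are close to $\vec q$, and then they are bounded in terms of $|\vec q|$ alone.

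\textbf{Step 1: boundary barrier.} Using $\beta_n\ge\mu_0/2$ and $[\psi]_{C^1}\le\mu$, I would modify the distance function to build a function $\Phi(x)$ comparable to $x_n-\psi(x')$ with $\beta\cdot D\Phi\ge c\mu_0$ on $\partial\Omega_1$. Adding a term $\pm \mathrm{C}_0 K\Phi(x)$ to the doubled function forces the maximum away from $\partial\Omega_1$. Concretely, at a boundary maximum point the test gradient $p$ would satisfy $\beta\cdot p> g$, which contradicts the viscosity inequality for $\mathcal{B}(Du,0,x)=g$. Smallness of $\mu$ keeps this barrier compatible with the ellipticity, which is where $\mu=\mu(\mu_0)$ comes from.

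\textbf{Step 2: Ishii--Lions.} Fix $x_0\in\Omega_{1/2}$ and consider
$$\max_{x,y}\;u(x)-u(y)-L\,\omega(|x-y|)-\tfrac{K}{2}|x-x_0|^2-\tfrac K2|y-x_0|^2 \pm(\text{barrier terms}),$$
with $\omega(t)=t^{\alpha'}$ or a concave modulus. Suppose the maximum is positive for large $L$. Then $x\ne y$, and both points are interior by Step 1. Jensen--Ishii gives sub/superjets $(p_x,X)$ and $(p_y,Y)$ with $p_x,p_y\approx L\omega'(|x-y|)\hat e$, where $\hat e=(x-y)/|x-y|$, up to errors of order $K$. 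The central difficulty is the alternative $|p_x-\vec q|\le\theta$ or $|p_y-\vec q|\le\theta$. Since both gradients equal the same large vector up to $O(K)$, this can only happen if $|\vec q|\gtrsim L\omega'$. In that case I would use the other point, because $|p-\vec q|>\theta$ holds there too once $L\omega'\gg K+\theta$. If instead both gradients are near $\vec q$, I would exploit that $\vec q$ is fixed: replace $u$ by $u-\vec q\cdot x$. Since $|\vec q|$ may be unbounded, the first-order term $\Lambda|Du|$ in $\mathcal{P}^\pm$ is then dangerous.

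The cleanest route is a dichotomy. If $|\vec q|\le A$ for a large $A=A(n,\lambda,\Lambda,\mathrm C_0)$, then for $L$ large the gradients have size $\gg A+1$, so $|p-\vec q|>\theta$ and the standard Pucci estimate applies. The matrix inequality yields a negative term $-cL\omega''$ in the direction $\hat e$, which dominates $\Lambda|p|\sim L\omega'$ and $\mathrm C_0$ for $|x-y|$ small. This gives a contradiction and hence a Lipschitz/Hölder bound. If $|\vec q|>A$, I would argue that on the set $\{|Du-\vec q|\le\theta\}$ the function $u$ has gradient of size about $|\vec q|$, which is incompatible with $\|u\|_\infty\le1$ on large portions. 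Precisely, I would run a Krylov--Safonov type argument for $u$, using that $u$ satisfies the Pucci inequalities wherever $|Du|\le A/2$. Then the \emph{small-gradient} approach (Imbert--Silvestre style, measure estimates via sliding paraboloids of bounded opening) gives Hölder continuity with constants independent of $\vec q$, since touching paraboloids of bounded slope never see the region $|Du-\vec q|\le\theta$.

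I expect this large-$|\vec q|$ case, combined with the oblique boundary in the $L^\varepsilon$/ABP machinery, to be the main obstacle. There I would rely on the boundary measure estimates for oblique problems from \cite{BKO,LiZhang}, with the barrier of Step 1 handling the boundary.
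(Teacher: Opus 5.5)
The paper gives no proof of this statement; it imports it from \cite[Theorem 3.1]{BKO}. Your outline does not close, and the decisive gap is at the oblique boundary.

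In Steps 1--2 you exclude boundary maximizers with a barrier $\pm \mathrm{C}_0K\Phi$. But at a maximizer $(\bar x,\bar y)$ with $\bar x\in\partial\Omega_1$, the test gradient is $p_x=L\omega'(|\bar x-\bar y|)\hat e+O(K)$, where $\hat e=(\bar x-\bar y)/|\bar x-\bar y|$. Since $\beta$ is not normal, $\beta(\bar x)\cdot\hat e$ has no sign. Already for a flat boundary one can take $\hat e$ almost tangential and parallel to $\pm\beta'$. Because $L\omega'$ is by design much larger than $K$, a barrier of size $\mathrm{C}_0K$ cannot make the boundary inequality fail.

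A correction strong enough to beat $L\omega'$ must have the same sign in $x$ and in $y$. This is because the subsolution inequality at a boundary $\bar x$ and the supersolution inequality at a boundary $\bar y$ must both be violated, and the latter involves $-D_y$ of the doubling function. So the correction would be $M(\Phi(x)+\Phi(y))$ with $M\gtrsim L/\mu_0$. Such a term does not vanish on the diagonal, and it destroys the conclusion $u(x)-u(y)\le L\omega(|x-y|)$.

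Doubling-variable arguments for oblique problems (Ishii, Barles) get the sign from the doubling term itself, via a quadratic form adapted to $\beta$. That needs regularity of $\beta$ and $\partial\Omega$ not assumed here: $\partial\Omega$ is only $C^1$ with small slope, and the constants must depend only on $n,\lambda,\Lambda,\mu_0,\mathrm{C}_0$. Your last sentence hands exactly this boundary step to \cite{BKO}, which is the result being proved, so the argument is circular.

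Your interior treatment of large $|\vec q|$ also fails as stated. A Krylov--Safonov argument cannot be run with paraboloids of bounded slope. The Calder\'on--Zygmund step of the $L^\varepsilon$ lemma on cubes of side $2^{-i}$, and the oscillation iteration on $B_\rho$, use paraboloids whose slopes in the original variables grow like $2^{i}$, respectively like $\operatorname{osc}_{B_\rho}u/\rho$. These slopes exceed any fixed $A$ at small scales and eventually reach $B_\theta(\vec q)$, where nothing is known. Likewise, ``use the other point'' is wrong: $p_x$ and $p_y$ differ only by $O(K)$, so both may lie in $B_\theta(\vec q)$.

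What does work in the interior for large $|\vec q|$ is Ishii--Lions with a Lipschitz modulus, e.g.\ $\omega(t)=t-\kappa t^{3/2}$. All test gradients then have size at most $L+2K$, so $|p-\vec q|>\theta$ at both points once $|\vec q|>L+2K+1$, and the drift $\Lambda|p|$ is bounded. This is a single-scale argument with no iteration. Together with your bounded-$|\vec q|$ case it gives interior estimates uniform in $\vec q$. The boundary estimate, which is the real content of the cited theorem, remains unproved.
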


As a consequence of the previous result, we address the H\"{o}lder regularity for the problem \eqref{Translated problem}.

\begin{theorem}[{\bf Modulus of continuity independent of $\vec{q}$}] \label{Holderregtransl}
Let $\vec q \in \mathbb{R}^n$ and $u$ be a viscosity solution to \eqref{Translated problem} with $f, \varrho \in L^{\infty}(\Omega_1)$, $\beta, g \in C^{0,\alpha}(\partial \Omega_1)$,  and $0 < \sigma \le 1+\gamma$. The following assertions are valid:
\begin{itemize}
\item[(a)] If $0<\sigma\leq \gamma$, and 
\begin{equation*}
\max\{\|u\|_{L^{\infty}(\Omega_{r})},\|f\|_{L^{\infty}(\Omega_{r})},\|g\|_{L^{\infty}(\mathrm{T}_{r})},\|\varrho\|_{L^{\infty}(\Omega_{r})}\}\leq 1.
\end{equation*}
Then \(u\in C^{0,\alpha'}(\overline{\Omega_{\frac{r}{2}}})\) for some \(\alpha'=\alpha'(n,r,\lambda,\Lambda,\mu_{0})\in (0,1)\). Moreover,
\begin{equation*}
\|u\|_{C^{0,\alpha'}(\overline{\Omega_{\frac{r}{2}}})}\leq \mathrm{C}(n,r,\lambda,\Lambda,\mu_{0}),
\end{equation*}
where \(\mathrm{C}>0\) is independent of \(\vec{q}\).
\item[(b)] If $0<\gamma<\sigma\leq 1+\gamma$ and 
\begin{equation*}
\max\left\{\|u\|_{L^{\infty}(\Omega_{r})},\|f\|_{L^{\infty}(\Omega_{r})},\|g\|_{L^{\infty}(\mathrm{T}_{r})},\|\varrho\|_{L^{\infty}(\Omega_{r})}\left(|\vec{q}|^{\sigma-\gamma}+\Lambda\right)\right\}\leq 1.    
\end{equation*}
Then \(u\in C^{0,\alpha'}(\overline{\Omega_{\frac{r}{2}}})\) for some \(\alpha'=\alpha'(n,r,\lambda,\Lambda,\mu_{0})\in (0,1)\). Moreover,
\begin{equation*}
\|u\|_{C^{0,\alpha'}(\overline{\Omega_{\frac{r}{2}}})}\leq \mathrm{C}(n,r,\lambda,\Lambda,\mu_{0}),
\end{equation*}
where \(\mathrm{C}>0\) is independent of \(\vec{q}\).
\end{itemize}
\end{theorem}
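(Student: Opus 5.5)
The plan is to reduce Theorem~\ref{Holderregtransl} to Theorem~\ref{Holderreg}. First rescale from $\Omega_r$ to $\Omega_1$ via $v(x)=u(rx)$, which keeps $\|v\|_{L^\infty}\le 1$ and changes $f,g,\varrho$ and the translation by factors depending only on $r$. The translation becomes $r\vec q$, and this must be tracked in case (b). The constants may then depend on $r$, as the statement allows. Since the boundary operator is $\mathcal{B}(Du,0,x)=\beta\cdot Du$, the boundary condition and the bound $\|g\|_{L^\infty}\le 1$ pass through unchanged, up to these scaling factors. What remains is to show that $u$ satisfies the two Pucci-type inequalities of \eqref{3.3} in the set $\{|Du-\vec q|>\theta\}$ for a suitable $\theta\in(0,1]$, with a constant $\mathrm{C}_0$ independent of $\vec q$.

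\textbf{Interior inequalities.} Take a test function $\varphi$ touching $u$ from above at $x_0$ with $|D\varphi(x_0)-\vec q|>\theta$, and write $\xi=D\varphi(x_0)-\vec q$. The equation gives $|\xi|^\gamma F(D^2\varphi)\ge f-\varrho|\xi|^\sigma$. Dividing by $|\xi|^\gamma$ and using (H1) with $F(0)=0$ gives
\[
\mathcal{M}^+(D^2\varphi)\ \ge\ -|\xi|^{-\gamma}\|f\|_\infty-\|\varrho\|_\infty|\xi|^{\sigma-\gamma}.
\]
The subsolution direction is symmetric.

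In case (a), $\sigma\le\gamma$. Choose $\theta=1$. Then $|\xi|>1$ gives $|\xi|^{-\gamma}\le 1$ and $|\xi|^{\sigma-\gamma}\le1$. Hence $\mathcal{M}^+(D^2\varphi)\ge -2$, and so $\mathcal{P}^+(D^2\varphi,D\varphi)\ge -2$ with $\mathrm{C}_0=2$, because the added $+\Lambda|Du|$ term only helps.

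In case (b), $\gamma<\sigma\le1+\gamma$, and $|\xi|^{\sigma-\gamma}$ is unbounded. Split it as
\[
|\xi|^{\sigma-\gamma}\le |\xi|^{\sigma-\gamma-1}|\xi|\le |\xi|^{\sigma-\gamma-1}\bigl(|D\varphi|+|\vec q|\bigr).
\]
Since $\sigma-\gamma-1\le0$ and $|\xi|>1$, the factor $|\xi|^{\sigma-\gamma-1}$ is at most $1$. The contribution $\|\varrho\||D\varphi|$ is absorbed into the first-order term $\Lambda|Du|$ of $\mathcal{P}^\pm$, because $\|\varrho\|\Lambda\le1$ gives $\|\varrho\|\le\Lambda$ once we normalize, say, $\Lambda\ge1$. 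The contribution $\|\varrho\||\vec q|\,|\xi|^{\sigma-\gamma-1}$ needs more care. When $|\xi|\ge |\vec q|$, use instead $|\xi|^{\sigma-\gamma}\le 2^{\sigma-\gamma}\bigl(|D\varphi|^{\sigma-\gamma}+|\vec q|^{\sigma-\gamma}\bigr)$. Then $\|\varrho\||\vec q|^{\sigma-\gamma}\le 1$ by hypothesis, and $|D\varphi|^{\sigma-\gamma}\le 1+|D\varphi|$ is absorbed as before.

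\textbf{Main obstacle.} The key difficulty is making this absorption uniform in $\vec q$, in particular controlling $\|\varrho\||\xi|^{\sigma-\gamma}$ by the combination $\|\varrho\|(|\vec q|^{\sigma-\gamma}+\Lambda)\le 1$ together with $\Lambda|D\varphi|$. The elementary inequality $(a+b)^s\le 2^s(a^s+b^s)$ for $s=\sigma-\gamma\in(0,1]$ does this in one stroke. It yields
\[
\mathcal{P}^+_{\lambda,\Lambda'}(D^2\varphi,D\varphi)\ge-\mathrm{C}_0
\]
with $\Lambda'=\max\{\Lambda,2\}$ and $\mathrm{C}_0=4$, both independent of $\vec q$. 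For $r<1$, the rescaling multiplies $\vec q$ by $r$ and $\varrho$ by $r^{1+\gamma-\sigma}\le1$, so the hypothesis is preserved.

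\textbf{Conclusion.} With both inequalities of \eqref{3.3} verified, assume $[\psi]_{C^1}\le\mu$; this is possible after a further dilation, with constants depending on $r$ and the $C^1$ modulus. Theorem~\ref{Holderreg} then gives the $C^{0,\alpha'}$ estimate with a constant independent of $\vec q$. Scaling back to $\Omega_{r/2}$ finishes the proof.
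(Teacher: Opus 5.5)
Your proposal is correct and follows essentially the same route as the paper. You rescale to $r=1$, verify the two Pucci inequalities of \eqref{3.3} with $\theta=1$ for test functions with $|D\varphi(x_0)-\vec q|>1$, and apply Theorem~\ref{Holderreg}; the estimates use $|\xi|^{\sigma-\gamma}\le 1$ in case (a), and in case (b) the subadditivity of $t\mapsto t^{\sigma-\gamma}$, the bound $\|\varrho\|_{L^\infty}|\vec q|^{\sigma-\gamma}\le 1$, and absorption of the $|D\varphi|$-part into the first-order term. The differences are cosmetic: the paper splits into $|D\varphi(x_0)|\ge 1$ and $|D\varphi(x_0)|<1$ where you use $|D\varphi|^{\sigma-\gamma}\le 1+|D\varphi|$, and it asserts $\|\varrho\|_{L^\infty}\le\Lambda$ where you enlarge $\Lambda$ (taking $\Lambda'=\max\{\Lambda,2/\Lambda\}$ removes the need to assume $\Lambda\ge 1$), while the flatness $[\psi]_{C^1}\le\mu$ is simply taken from the standing assumption of Section~\ref{Section3}.
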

\begin{proof}
Via scaling, we may assume that $r=1$. We will first prove item (a). We shall show that $u$ satisfies the assumptions of Theorem \ref{Holderreg} with $\theta=1$ and $\mathrm{C}_{0}=2$.  Indeed, let $\varphi\in C^{2}$ be a test function such that $u-\varphi$ attains a local maximum at $x_{0}\in \Omega_{1}$ and $|D\varphi(x_{0})-\vec{q}|>1$. Using that $u$ solves \eqref{Translated problem}, assumption \textbf{(A1)}, and the smallness condition, we have that
\begin{eqnarray}
1&\geq& \|f\|_{L^{\infty}(\Omega_{1})}\geq f(x_{0})\geq |D\varphi(x_{0})-\vec{q}|^{\gamma}\left(F(D^{2}\varphi(x_{0}),x_{0})+\frac{\varrho(x_{0})}{|D\varphi(x_{0})-\vec{q}|^{\gamma-\sigma}}\right)\nonumber\\
&\geq&|D\varphi(x_{0})-\vec{q}|^{\gamma}\left(\mathcal{M}^{-}_{\lambda,\Lambda}(D^{2}\varphi(x_{0}))-\frac{\|\varrho\|_{L^{\infty}(\Omega_{1})}}{|D\varphi(x_{0})-\vec{q}|^{\gamma-\sigma}}\right)\nonumber\\
&\geq&|D\varphi(x_{0})-\vec{q}|^{\gamma}\left(\mathcal{P}^{-}_{\lambda,\Lambda}(D^{2}\varphi(x_{0}),D\varphi(x_{0}))-\|\varrho\|_{L^{\infty}(\Omega_{1})}\right)\nonumber\\
&\geq&|D\varphi(x_{0})-\vec{q}|^{\gamma}\left(\mathcal{P}^{-}_{\lambda,\Lambda}(D^{2}\varphi(x_{0}),D\varphi(x_{0}))-1\right),\label{3.4}
\end{eqnarray}
where we used $\gamma\geq \sigma$ and $|D\varphi(x_{0})-\vec{q}|>1$. Therefore, using again that $|D\varphi(x_{0})-\vec{q}|>1$ and $\gamma>0$, it follows from \eqref{3.4} that
\begin{equation}\label{3.5}
\mathcal{P}^{-}_{\lambda,\Lambda}(D^{2}\varphi(x_{0}),D\varphi(x_{0}))\leq 2=\mathrm{C}_{0}.
\end{equation}
On the other hand, using the relation between the exponents $\sigma$ and $\gamma$, together with the smallness of $\varrho$ and $f$, we obtain similarly to \eqref{3.4} that
\begin{equation*}
\mathcal{P}^{+}_{\lambda,\Lambda}(D^{2}\varphi(x_{0}),D\varphi(x_{0}))\geq -\mathrm{C}_{0}.
\end{equation*}
Therefore, $u$ satisfies the system \eqref{3.3} with the stated constants, and hence the asserted H\"{o}lder regularity follows from Theorem \ref{Holderreg}.

For item (b), given a test function $\varphi \in C^{2}$ such that $u-\varphi$ attains a local minimum at some $x_{0}\in \Omega_{1}$ and $|D\varphi(x_{0})-\vec{q}|>1$ using that $u$ is a solution of \eqref{Translated problem}, it follows that
\begin{eqnarray}
1\geq \|f\|_{L^{\infty}(\Omega_{1})}\geq  |D\varphi(x_{0})-\vec{q}|^{\gamma}\left(\mathcal{M}^{-}_{\lambda,\Lambda}(D^{2}\varphi(x_{0}))-\|\varrho\|_{L^{\infty}(\Omega_{1})}|D\varphi(x_{0})-\vec{q}|^{\sigma-\gamma}\right). \label{3.6}
\end{eqnarray}
Now, let us split the analysis into two cases:\\
I. $|D\varphi(x_{0})|\geq 1$\\
Note that, using this fact and  $\sigma-\gamma\in (0,1]$ in \eqref{3.6}, we have that
\begin{eqnarray*}
1&\geq&|D\varphi(x_{0})-\vec{q}|^{\gamma}\left(\mathcal{M}^{-}_{\lambda,\Lambda}(D^{2}\varphi(x_{0}))-\|\varrho\|_{L^{\infty}(\Omega_{1})}(|D\varphi(x_{0})|^{\sigma-\gamma}+|\vec{q}|^{\sigma-\gamma}\right)\nonumber\\
&\geq&|D\varphi(x_{0})-\vec{q}|^{\gamma}\left(\mathcal{M}^{-}_{\lambda,\Lambda}(D^{2}\varphi(x_{0}))-\|\varrho\|_{L^{\infty}(\Omega_{1})}(|D\varphi(x_{0})|+|\vec{q}|^{\sigma-\gamma})\right)\nonumber\\
&\geq&|D\varphi(x_{0})-\vec{q}|^{\gamma}\left(\mathcal{P}^{-}_{\lambda,\Lambda}(D^{2}\varphi(x_{0}),D\varphi(x_{0}))+|D\varphi(x_{0})|(\Lambda-\|\varrho\|_{L^{\infty}(\Omega_{1})})-1\right)\nonumber\\
&\geq&|D\varphi(x_{0})-\vec{q}|^{\gamma}\left(\mathcal{P}^{-}_{\lambda,\Lambda}(D^{2}\varphi(x_{0}),D\varphi(x_{0}))-1\right),
\end{eqnarray*}
since by the smallness condition we have that $\|\varrho\|_{L^{\infty}(\Omega_{1})}\leq \Lambda$ and $\|\varrho\|_{L^{\infty}(\Omega_{1})}|\vec{q}|^{\sigma-\gamma}\leq 1$.\\
II, $|D\varphi(x_{0})|<1$\\
In this case, using that $\|\varrho\|_{L^{\infty}(\Omega_{1})}\leq \Lambda$ and $\|\varrho\|_{L^{\infty}(\Omega_{1})}|\vec{q}|^{\sigma-\gamma}\leq 1$, we can conclude that 
\begin{eqnarray*}
1&\geq&|D\varphi(x_{0})-\vec{q}|^{\gamma}\left(\mathcal{M}^{-}_{\lambda,\Lambda}(D^{2}\varphi(x_{0}))-\|\varrho\|_{L^{\infty}(\Omega_{1})}(|D\varphi(x_{0})|^{\sigma-\gamma}+|\vec{q}|^{\sigma-\gamma}\right)\nonumber\\
&\geq&|D\varphi(x_{0})-\vec{q}|^{\gamma}\left(\mathcal{M}^{-}_{\lambda,\Lambda}(D^{2}\varphi(x_{0}))-\left(1+\Lambda\right)\right)\nonumber\\
&\geq&|D\varphi(x_{0})-\vec{q}|^{\gamma}\left(\mathcal{P}^{-}_{\lambda,\Lambda}(D^{2}\varphi(x_{0}),D\varphi(x_{0}))-\left(1+\Lambda\right)\right).
\end{eqnarray*}
In this case, by I. and II., it follows that \eqref{3.5} is also valid with $\mathrm{C}_{0}=\max\left\{2+\Lambda,2\right\}$. Similarly, one can show that $u$ satisfies
\begin{equation*}
\mathcal{P}^{+}_{\lambda,\Lambda}(D^{2}u,Du)\geq -\mathrm{C}_{0}\,\,\, \text{in}\,\,\, \{|Du-\vec{q}|>1\}\cap \Omega_{1}.
\end{equation*}
Therefore, we can conclude that $u$ solves the system, and again, by applying Theorem \ref{Holderreg}, it follows that $u$ is H\"{o}lder continuous.
\end{proof}

\section{Proof of the main results}\label{Section4}

In this section, we prove Theorem \ref{main} and Corollary \ref{main1}. Our approach relies on the Geometric Tangential Method combined with an improvement-of-flatness scheme, which yields the desired boundary $C^{1,\alpha}$ estimates.

\subsection{An Approximation result}

Using the compactness result established in the previous section, we show that solutions to
the translated problem \eqref{Translated problem} are stable under uniform limits and can be approximated by
solutions to a homogeneous uniformly elliptic equation with constant oblique boundary data.
This is the content of the following result.
\begin{theorem}[\bf Approximation tool] \label{stab}
Let $\vec q\in\mathbb{R}^n$ and let $u$ be a normalized viscosity solution of \eqref{Translated problem} (that is, $|u|_{L^\infty(\Omega_1)}\le 1$). Given $\delta>0$, there exists $\epsilon=\epsilon(\delta,n,\lambda,\Lambda,\mu_0)>0$ such that if
$$
\|f\|_{L^{\infty}(\Omega_1)}+ \|g\|_{L^{\infty}(\mathrm{T}
_1)}+ \|\beta-\beta_0\|_{C^{0,\alpha}(\mathrm{T}
_1)}+\|\psi_{\Omega}\|_{C^1(\mathrm{T}
_1)}+\|\varrho\|_{L^{\infty}(\Omega_1)}(|\vec q|^{(\sigma-\gamma)_+ }+1) < \epsilon,
 $$
 then there exists a function $v\in C^{1,\alpha_0}(\mathrm{B}^+_{3/4})$ such that
 $$
\left\{
\begin{array}{rclcl}
F(D^2 v)  &=& 0 & \mbox{in} & \mathrm{B}^{+}_{\frac{3}{4}}, \\
\beta_0 \cdot Dv&=& 0 & \mbox{on} & \partial \mathrm{T}_{\frac{3}{4}}.
 \end{array}
    \right.
 $$
where $\beta_0:=\beta(0)$  and
 $$
 \|u-v\|_{L^{\infty}(\Omega_{1/2})} \le \delta. 
 $$
\end{theorem}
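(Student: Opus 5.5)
We argue by contradiction, combined with compactness. Suppose the statement fails for some $\delta_0>0$. Then there are sequences $\vec q_k\in\mathbb{R}^n$, domains $\Omega^k$ with boundary graphs $\psi_k$, data $f_k,g_k,\beta_k,\varrho_k$, operators $F_k$ satisfying \textbf{(H1)}, and normalized viscosity solutions $u_k$ of \eqref{Translated problem} such that
$$
\|f_k\|_{L^\infty}+\|g_k\|_{L^\infty}+\|\beta_k-\beta_k(0)\|_{C^{0,\alpha}}+\|\psi_k\|_{C^1}+\|\varrho_k\|_{L^\infty}\bigl(|\vec q_k|^{(\sigma-\gamma)_+}+1\bigr)\le \tfrac1k ,
$$
but $\|u_k-v\|_{L^\infty(\Omega^k_{1/2})}>\delta_0$ for every solution $v$ of the limiting constant-coefficient oblique problem. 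For $k$ large the smallness hypotheses of Theorem \ref{Holderregtransl} hold: in case (a) trivially, and in case (b) because $\|\varrho_k\|(|\vec q_k|^{\sigma-\gamma}+\Lambda)\lesssim 1/k$. Hence the $u_k$ are uniformly $C^{0,\alpha'}$ on $\overline{\Omega^k_{1/2}}$, and, after scaling, on any slightly larger region $\overline{\Omega^k_{r}}$ with $r<1$, say $r=7/8$, with bounds independent of $\vec q_k$. Since $\psi_k\to0$ in $C^1$, the domains converge to $\mathrm{B}_1^+$. By Arzel\`a--Ascoli, after extending the $u_k$ continuously, we obtain a subsequence with $u_k\to u_\infty$ locally uniformly in $\overline{\mathrm{B}^+_{7/8}}$, $\beta_k(0)\to\beta_0$, and $F_k\to F_\infty$ locally uniformly, where $F_\infty$ is still $(\lambda,\Lambda)$-elliptic. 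If the operator $F$ is fixed, no convergence of $F_k$ is needed.

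The main step is to show that $u_\infty$ solves $F_\infty(D^2u_\infty)=0$ in $\mathrm{B}^+_{3/4}$ and $\beta_0\cdot Du_\infty=0$ on $\mathrm{T}_{3/4}$ in the viscosity sense. We split according to whether $\{\vec q_k\}$ is bounded.

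\emph{The bounded case.} Then $\vec q_k\to\vec q_\infty$, and the limit equation is $|Du_\infty-\vec q_\infty|^\gamma F_\infty(D^2u_\infty)=0$. To remove the degeneracy we use the standard argument of Imbert--Silvestre type, as in \cite{PT,BKO}. Take a quadratic polynomial $P$ touching $u_\infty$ from below at $x_0$ with $F_\infty(D^2P)<0$. If $DP(x_0)\neq\vec q_\infty$, perturbing $P$ and passing to the limit in the equation for $u_k$ gives a contradiction, since the Hamiltonian term and $f_k$ vanish in the limit. If $DP(x_0)=\vec q_\infty$, we perturb $P$ by a small linear term $e\cdot(x-x_0)$. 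The touching point then moves, but the gradient of the perturbed test function stays away from $\vec q_\infty$ for suitable $e$. If $D^2P$ has a positive eigenvalue, we instead use the trick with the eigenspace of $D^2P$ and a convex-function comparison. In both subcases we reach a contradiction, so $u_\infty$ is an $F_\infty$-supersolution; the subsolution property is symmetric.

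\emph{The unbounded case.} If $|\vec q_k|\to\infty$, then on any test function $|D\varphi-\vec q_k|\ge |\vec q_k|/2$ for large $k$. Dividing the equation by $|D\varphi-\vec q_k|^\gamma$ gives
$$
F_k(D^2\varphi)=O\bigl(|f_k|\,|\vec q_k|^{-\gamma}\bigr)+O\bigl(|\varrho_k|\,|\vec q_k|^{\sigma-\gamma}\bigr)\to0 .
$$
Here the hypothesis $\|\varrho_k\|\,|\vec q_k|^{(\sigma-\gamma)_+}\to0$ is exactly what controls the Hamiltonian term, and we expect this to be the main obstacle in the $\sigma>\gamma$ regime.

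\emph{The boundary condition.} On $\mathrm{T}_{3/4}$, suppose a test function $\varphi$ touches $u_\infty$ from below at $x_0\in\mathrm{T}_{3/4}$. Perturbing to strict touching, the functions $u_k-\varphi$ have minima at points $x_k\to x_0$ lying in $\overline{\Omega^k}$. If $x_k\in\partial\Omega^k$ infinitely often, then $\beta_k(x_k)\cdot D\varphi(x_k)\ge g_k(x_k)$, and in the limit $\beta_0\cdot D\varphi(x_0)\ge0$. Otherwise the interior inequality passes to the limit as above, yielding the generalized (viscosity) oblique condition. Constant-coefficient oblique problems admit the standard equivalence between generalized and classical viscosity boundary conditions, so we may adopt the usual trick of replacing $\varphi$ by $\varphi-\eta x_n$. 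Since $\beta_{0,n}\ge\mu_0/2$, this forces the boundary inequality. Thus $u_\infty$ solves the limit problem.

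By \textbf{(H2)}, $u_\infty\in C^{1,\alpha_0}(\mathrm{B}^+_{3/4})$, and $u_\infty$ is a valid choice of $v$ (with $F_\infty$ in place of $F$ if $F$ varies). Uniform convergence gives $\|u_k-u_\infty\|_{L^\infty(\Omega^k_{1/2})}<\delta_0$ for large $k$, contradicting the choice of the sequence. Since $\delta$ is arbitrary, $\epsilon$ depends only on $\delta,n,\lambda,\Lambda,\mu_0$.
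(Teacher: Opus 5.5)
Most of your argument tracks the paper's. You use the uniform H\"older bounds from Theorem \ref{Holderregtransl}, split into bounded and unbounded $\vec q_k$, apply the cutting lemma in the bounded case, and divide by $|\vec q_k|^\gamma$ in the unbounded case. Your handling of the oblique condition in the limit is more explicit than the paper's.

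The gap is in the final step. Negating the statement, with $\epsilon$ depending only on $\delta,n,\lambda,\Lambda,\mu_0$, gives for each $k$ an operator $F_k$ and a field $\beta_k$. The contradiction hypothesis is that $\|u_k-v\|_{L^\infty(\Omega^k_{1/2})}>\delta_0$ for every $v$ solving $F_k(D^2v)=0$ in $\mathrm{B}^+_{3/4}$ and $\beta_k(0)\cdot Dv=0$ on $\mathrm{T}_{3/4}$. So the competitor must be built with the $k$-th operator and the $k$-th vector $\beta_k(0)$. Your limit $u_\infty$ solves the problem with $F_\infty$ and $\beta_\infty=\lim_k\beta_k(0)$ instead. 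It is therefore not an admissible competitor for any $u_k$, and the bound $\|u_k-u_\infty\|<\delta_0$ contradicts nothing. You cannot escape by freezing the data. Even with $F$ fixed, $\beta_k(0)$ genuinely varies along the sequence. Freezing both $F$ and $\beta(0)$ would make $\epsilon$ depend on them, which the statement forbids.

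What is missing is a way to manufacture admissible competitors close to $u_\infty$. The paper does this by solving, for each $k$, the mixed problem $F_k(D^2w_k)=0$ in $\mathrm{B}^+_{3/4}$, $\beta_k(0)\cdot Dw_k=0$ on $\mathrm{T}_{3/4}$, $w_k=u_k$ on $\partial\mathrm{B}^+_{3/4}\setminus\mathrm{T}_{3/4}$; existence and uniqueness come from Li--Zhang. Each $w_k$ is admissible. By stability, $w_k\to w_\infty$, which solves the limiting mixed problem with Dirichlet data $u_\infty$. Since $u_\infty$ solves that same problem, uniqueness gives $w_\infty=u_\infty$. Hence $\|u_k-w_k\|_{L^\infty(\Omega^k_{1/2})}\to0$, which is the actual contradiction. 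Passing to the limit in $w_k$ needs equicontinuity up to the Dirichlet part of the boundary, and this is where the uniform H\"older control of the $u_k$ enters again.

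A smaller point concerns your boundary argument. Subtracting $\eta x_n$ leaves $D^2\varphi$ unchanged, so it cannot exclude the interior alternative of the generalized boundary condition; it only moves the gradient off $\vec q_\infty$. You also need a term $c\,x_n^2$, with $|c|$ large and of suitable sign, so that the interior inequality for $F_\infty$ fails at $x_0$ while the boundary quantity changes only by $\eta\beta_{0,n}$.
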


\begin{proof}
We argue by contradiction. Suppose the conclusion fails. Then there exist $\delta_0>0$
and sequences $F_k$, $u_k$, $f_k$, $\varrho_k$, $g_k$, $\beta_k$, $\Omega_k$, and $\vec q_k$
such that $\|u_k\|_{L^\infty((\Omega_k)_1)} \le 1$ and
\begin{equation} \label{3}
\left\{
\begin{array}{rclcl}
|Du_k-\vec{q}_k|^{\gamma} F_k(D^2 u_k) + \varrho_k(x) |Du_k-\vec{q}_k|^{\sigma} &=& f_k(x) & \mbox{in} & (\Omega_k)_1, \\
\beta_k \cdot D u_k &=& g_k(x) & \mbox{on} & \partial (\Omega_k)_1.
    \end{array}
    \right.
 \end{equation}
 satisfying 
 \begin{equation} \label{4}
 \|f_k\|_{L^{\infty}((\Omega_{k})_{1})}+ \|g_k\|_{L^{\infty}(\mathrm{T}_1)}+ \|\beta_k-\beta_k(0)\|_{C^{0,\alpha}(\mathrm{T}_{1})}+ \|\psi_{\Omega_k}\|_{C^1(\mathrm{T}_1)}+ \|\varrho_k\|_{L^{\infty}((\Omega_{k})_{1})}(|\vec q_k|^{(\sigma-\gamma)_+}+1) \le \frac{1}{k}
 \end{equation}
 but such that
 $$
 \|u_k-v_k\|_{L^{\infty}((\Omega_{k})_{1/2})} \ge \delta_0
 $$
 for every function $v$ satisfying the corresponding limiting conditions.
 
 Notice that
 $$
\|\varrho_k\|_{L^{\infty}} \cdot \left(|\vec q_k|^{(\sigma-\gamma)_{+}} +\max\left\{1,\Lambda\right\}\right) \le \frac{1+\max\left\{1,\Lambda\right\}}{k}
 $$
 for all $k \ge 1$. Hence, by Theorem \ref{Holderregtransl},the sequence $(u_k)_{k\in\mathbb N}$ is precompact in $C^{0}_{\mathrm{loc}}$.  In addition, by the uniform ellipticity of $F_k$ and condition \eqref{4}, we may assume (up to a subsequence) that $F_k \to F_\infty$ locally uniformly in $\mathrm{Sym}(n)$. Moreover, $\beta_k \to \beta_\infty$ uniformly on $\mathrm{T}_{\frac{4}{5}}$ for some constant oblique vector $\beta_\infty$, and $\Omega_k\to \mathrm{B}^+_{\frac{4}{5}}$ in the sense of domains.
 
 We claim that the limit $u_\infty$ is a viscosity solution of
 \begin{equation}\label{6}
  \left\{
    \begin{array}{rclcl}
     F_{\infty}(D^2 u_{\infty}) &=& 0 & \mbox{in} & \mathrm{B}^+_\frac{4}{5}, \\
    \beta_{\infty} \cdot D u_{\infty} &=& 0 & \mbox{on} & \mathrm{T}_\frac{4}{5}.
    \end{array}
    \right.
 \end{equation}
Indeed, if $(\vec q_k)_{k\in\mathbb N}$ is bounded, then (up to a subsequence) $\vec q_k\to \vec q_\infty$,
and by stability $u_\infty$ solves
$$
  \left\{
    \begin{array}{rclcl}
     |Du_{\infty}-\vec q_{\infty}|^{\gamma}F_{\infty}(D^2 u_{\infty}) &=& 0 & \mbox{in} & \mathrm{B}^+_\frac{4}{5}, \\
    \beta_{\infty} \cdot D u_{\infty} &=& 0 & \mbox{on} & \mathrm{T}_\frac{4}{5}.
    \end{array}
    \right.
 $$
 By the cutting lemma (cf. \cite[Lemma 2.5]{BesRicSil26}), it follows that $u_\infty$ solves \eqref{6}.

On the other hand, if $(\vec q_k)_{k\in\mathbb N}$ is unbounded then, up to a subsequence, we may assume that $|\vec q_k|>k$ for every $k\in\mathbb N$. The boundary condition passes to the limit (via stability), yielding
$\beta_\infty\cdot Du_\infty = 0$ on $\mathrm{T}_{\frac{4}{5}}$. It remains to show that $u_\infty$ satisfies
$F_\infty(D^2u_\infty)=0$ in $\mathrm{B}^+_{\frac{4}{5}}$.
 
We prove that $u_\infty$ is a viscosity subsolution, since the supersolution case is analogous. Fix $x_0\in \mathrm{B}^+_{\frac{4}{5}}$ and consider the quadratic polynomial 
\[
p(x):=u_\infty(x_{0})+\vec{b}_{0}\cdot(x-x_{0})+\frac{1}{2}(x-x_{0})^T \mathrm{M}_{0}(x-x_{0}).
\]
Assume that $p$ touches $u_\infty$ from below at $x_0$ in $\mathrm{B}^+_{\frac{4}{5}}$. We aim to show that $F_\infty(\mathrm{M}_0)\le 0$. For this, fix $0<r\ll 1$ such that $\overline{\mathrm{B}_{r}(x_{0})}\subset \mathrm{B}^{+}_{\frac{4}{5}}$ and define the points $x_k\in \overline{\mathrm{B}_{r}^{+}(x_{0})}$ by
\[
p(x_k)-u_k(x_k)=\max_{x\in \overline{\mathrm{B}_{r}(x_{0})}}\big(p(x)-u_k(x)\big).
\]
In this case, there exists $k^{\star} \in \mathbb{N}$ such that
\begin{equation}\label{conddeb}
\max \{1,|\vec b_{0}|\} < |\vec q_k|.
\end{equation}
Define,
$$
H_k(x,\vec b_0 + \vec q_k) = \varrho_k(x) |\vec b_{0} + \vec q_k|^\sigma\,\,\, \text{and}\,\,\, \vec e_k = \frac{\vec q_k}{|\vec q_k|}.
$$
Since $u_k$ is a viscosity supersolution, we obtain
\begin{equation}\label{eqdesub}
\left | \frac{\vec b_{0}}{|\vec q_k|} + \vec e_k \right |^{\gamma}F_k(\mathrm{M}_{0}) + \tilde{H}_k(x_{k},\vec b_{0} + \vec q_k) \leq \frac{f_k(x_{k})}{|\vec q_k|^{\gamma}},
\end{equation}
where $\tilde{H}_k(x_{k},\vec b_0 + \vec q_k) = |\vec q_k|^{-\gamma}H_k(x_{k},\vec b_0 + \vec q_k)$. Now, note that $\tilde{H}$ satisfies
\begin{eqnarray}
|\tilde{H}_k(x_{k},\vec b+\vec q_k)| &\le& \|\varrho_k\|_{L^{\infty}((\Omega_{k})_{1})} |\vec q_k|^{\sigma-\gamma} \left | \frac{\vec b}{|\vec q_k|}+ \vec e_k \right |^\sigma\nonumber\\
&\le& \|\varrho_k\|_{L^{\infty}((\Omega_{k})_{1})} |\vec q_k|^{\sigma-\gamma} \left( \frac{|\vec b|}{|\vec q_k|} +1 \right)^\sigma\nonumber\\
&\le& 2^\sigma \frac{|\vec q_k|^{\sigma-\gamma}}{k (|\vec q_k|^{(\sigma-\gamma)_{+}}+1)},\label{conddeH}	
\end{eqnarray}
where we used \eqref{4} and \eqref{conddeb}. Now, let us analyze the right-hand side of estimate \eqref{conddeH}:
\begin{enumerate}
\item[(i)] If $0< \sigma-\gamma$, then
$$
2^\sigma \frac{|\vec q_k|^{\sigma-\gamma}}{k (|\vec q_k|^{(\sigma-\gamma)_{+}}+1)} \le \frac{2^\sigma}{k} \to 0, \,\,\, \textrm{as} \,\,\, k \to +\infty.
$$
\item[(ii)] If $\sigma \le \gamma$, then $(\sigma-\gamma)_+=0$. Thus,  since $\sigma-\gamma \le 0$ and $|\vec q_k| \to \infty$, the term $|\vec q_k|^{\sigma-\gamma} \le 1$  is bounded. Therefore, 
$$
2^\sigma \frac{|\vec q_k|^{\sigma-\gamma}}{k (|\vec q_k|^{(\sigma-\gamma)_{+}}+1)}  \le \frac{\mathrm{C}}{k} \to 0.
$$
\end{enumerate}
Hence, from cases (i) and (ii), we can conclude that 
$\tilde{H}_k(x_{k},\vec{b}_{0}+\vec{q}_{k}) \to 0$ as $k\to \infty$. Moreover, by the condition \eqref{4} it follows 
\[
\left|\frac{f_{k}(x_{k})}{|\vec{q}_{k}|^{\gamma}}\right|\leq \frac{1}{k|\vec{q}_{k}|^{\gamma}}\to 0\,\,\, \text{as} \,\,\, k\to \infty.
\]
Finally, as $|\vec e_{\infty}|=1$, up to a subsequence, $\vec e_k \to \vec e_{\infty}$ for some unit vector $\vec{e}_{\infty}$. Therefore, in view of these convergences, letting $k\to\infty$ in \eqref{eqdesub} yields
$$
|0+\vec e_{\infty}|^{\gamma} F_{\infty}(\mathrm{M}_{0})\leq0 .
$$
By the cutting lemma as in \cite{Silvestre}, we have $F_{\infty}(\mathrm{M}_{0})\leq0$. This proves the claim. Hence, $u_{\infty}$ satisfies \eqref{6}.

Now, let $w_k$ be the unique viscosity solution of
$$
 \left\{
    \begin{array}{rclcl}
     F_{k}(D^2 w_{k}) &=& 0 & \mbox{in} & \mathrm{B}^+_{\frac{3}{4}}, \\
    \beta_{k}(0) \cdot D w_{k} &=& 0 & \mbox{on} & \mathrm{T}_{\frac{3}{4}}.\\
    w_k &=& u_k & \mbox{on} & \partial \mathrm{B}^{+}_{\frac{3}{4}}\setminus \mathrm{T}_{\frac{3}{4}}.
    \end{array}
    \right.
$$
Existence and uniqueness follow from \cite[Theorem 3.3]{LiZhang}. By stability, $w_k\to w_\infty$ locally uniformly, where $w_\infty$ satisfies
$$
  \left\{
    \begin{array}{rclcl}
     F_{\infty}(D^2 w_{\infty}) &=& 0 & \mbox{in} & \mathrm{B}^+_{\frac{3}{4}}, \\
     \beta_0 \cdot D w_{\infty} &=& 0 &\mbox{on}& \mathrm{T}_{\frac{3}{4}} \\
    w_{\infty} &=& u_{\infty} & \mbox{on} & \partial \mathrm{B}^{+}_{\frac{3}{4}}\setminus \mathrm{T}_{\frac{3}{4}}.
    \end{array}
    \right.
 $$
By uniqueness, we obtain $w_\infty=u_\infty$, which contradicts the assumption
$\|u_k-v\|_{L^\infty((\Omega_k)_{1/2})}\ge \delta_0$ for $k$ large. This completes the proof.
\end{proof}

\subsection{Proof of Theorem \ref{main}} 

We begin by establishing a discrete improvement-of-flatness estimate, which is the key step towards the optimal boundary $C^{1,\alpha}$ regularity.

\begin{lemma}[\textbf{Improvement of flatness}]\label{key 1}
Let $u$ be a normalized viscosity solution of
\[
\begin{cases}
|Du-\vec q|^\gamma F(D^2u)+\varrho(x)|Du-\vec q|^\sigma = f(x) & \text{in } \Omega_1,\\
\beta\cdot Du = g(x) & \text{on } \partial\Omega_1,
\end{cases}
\]
for some $\vec q\in\mathbb R^n$. Fix $0<\hat\alpha<\alpha_0$. Then there exist $0<r<1/2$ and $\epsilon>0$ such that if
\[
\|f\|_{L^\infty(\Omega_1)}+\|g\|_{L^\infty(\mathrm{T}_1)}
+\|\beta-\beta_0\|_{C^{0,\alpha}(\mathrm{T}_1)}
+\|\psi\|_{C^1(\mathrm{T}_1)}
+\|\varrho\|_{L^\infty(\Omega_1)}\big(|\vec q|^{(\sigma-\gamma)_+}+1\big)
<\epsilon,
\]
then there exists an affine function $\ell(x)=u(0)+\vec b\cdot x$ such that
\[
\|u-\ell\|_{L^\infty(\Omega_r)}\le r^{1+\hat\alpha},
\qquad
\beta_0\cdot \vec b = 0,
\qquad
|\vec b|\le \mathrm{C}_\star.
\]
\end{lemma}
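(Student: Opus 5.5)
The plan is the standard approximation-plus-regularity argument, built on Theorem \ref{stab} and the $C^{1,\alpha_0}$ estimate in \textbf{(H2)}.

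\emph{Choice of $r$.} Since $\hat\alpha<\alpha_0$, we can fix $r\in(0,1/2)$ with
\[
\mathrm{C}_\star r^{1+\alpha_0}\le \tfrac12 r^{1+\hat\alpha}
\quad\text{and}\quad
\mathrm{C}_\star r^{\alpha_0}\le\tfrac14 r^{\hat\alpha}.
\]
This is possible because $r^{\alpha_0-\hat\alpha}\to0$. The constant $\mathrm{C}_\star$ is the one from \textbf{(H2)}, possibly enlarged by a harmless factor coming from the rescaling between $\mathrm{B}^+_{3/4}$ and $\mathrm{B}^+_1$. Set $\delta:=\frac14 r^{1+\hat\alpha}$, and let $\epsilon=\epsilon(\delta,n,\lambda,\Lambda,\mu_0)$ be given by Theorem \ref{stab}. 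Then $r$, $\delta$ and $\epsilon$ depend only on $n,\lambda,\Lambda,\mu_0,\hat\alpha$. In particular they do not depend on $\vec q$, because the smallness hypothesis $\|\varrho\|_{L^\infty}(|\vec q|^{(\sigma-\gamma)_+}+1)<\epsilon$ is exactly the one required in Theorem \ref{stab}.

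\emph{Approximation and Taylor expansion.} Theorem \ref{stab} gives $v$ solving $F(D^2v)=0$ in $\mathrm{B}^+_{3/4}$ and $\beta_0\cdot Dv=0$ on $\mathrm{T}_{3/4}$, with $\|u-v\|_{L^\infty(\Omega_{1/2})}\le\delta$. By \textbf{(H2)} (after scaling) and $\|v\|_{L^\infty}\le 1+\delta\le 2$ we have $\|v\|_{C^{1,\alpha_0}(\overline{\mathrm{B}^+_{1/2}})}\le \mathrm{C}_\star$, with constants adjusted. Set $\vec b:=Dv(0)$. Then $|\vec b|\le\mathrm{C}_\star$. Moreover $0\in\mathrm{T}_{3/4}$ and $v\in C^1$ up to the flat boundary, so the oblique condition holds classically there and $\beta_0\cdot\vec b=0$. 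The Taylor bound $|v(x)-v(0)-\vec b\cdot x|\le \mathrm{C}_\star|x|^{1+\alpha_0}$ then gives, on $\Omega_r\cap \mathrm{B}^+_{1/2}$,
\[
|u(x)-u(0)-\vec b\cdot x|\le |u-v|(x)+|v(x)-v(0)-\vec b\cdot x|+|v(0)-u(0)|\le 2\delta+\mathrm{C}_\star r^{1+\alpha_0}\le r^{1+\hat\alpha}.
\]
This is the required estimate with $\ell(x)=u(0)+\vec b\cdot x$.

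\emph{Main obstacle: the domain mismatch.} The function $v$ lives on the flat half ball $\mathrm{B}^+_{3/4}$, while $\Omega_r$ lies above the graph $x_n=\psi(x')$, which may dip slightly below $\{x_n=0\}$. There are two ways I would handle this.

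First, I would extend $v$ across $\mathrm{T}_{3/4}$ by a $C^{1,\alpha_0}$ extension, for instance by reflection of its first-order Taylor data. The extended $v$ keeps the bound $|v(x)-v(0)-\vec b\cdot x|\le \mathrm{C}|x|^{1+\alpha_0}$ on $\mathrm{B}_{1/2}$. I would then read the closeness $\|u-v\|\le\delta$ on all of $\Omega_{1/2}$, in the sense used in Theorem \ref{stab}.

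Second, if that closeness is only available on $\Omega_{1/2}\cap\mathrm{B}^+_{1/2}$, I would instead shrink $\epsilon$ so that $\|\psi\|_{C^1}<\epsilon$ forces the strip $\Omega_r\setminus\mathrm{B}^+_r$ to have width at most $\epsilon r$. On that strip I would control $u$ by the uniform Hölder modulus from Theorem \ref{Holderregtransl}, which is independent of $\vec q$. This leaves an extra error of order $(\epsilon r)^{\alpha'}+\mathrm{C}_\star\epsilon r$, and that error is absorbed into $r^{1+\hat\alpha}$ once $\epsilon$ is chosen small depending on $r$.
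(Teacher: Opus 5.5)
Your proposal is correct and follows the paper's proof. You approximate $u$ by the function $v$ from Theorem \ref{stab}, set $\vec b=Dv(0)$, and use the $C^{1,\alpha_0}$ Taylor bound from \textbf{(H2)} with $r$ chosen so that $2\delta$ and $\mathrm{C}_\star r^{1+\alpha_0}$ together fit inside $r^{1+\hat\alpha}$. Your explicit ordering of constants ($r$, then $\delta$, then $\epsilon$) and your treatment of the strip $\Omega_r\setminus\mathrm{B}^+_r$ through the $\vec q$-independent H\"older bound of Theorem \ref{Holderregtransl} are refinements that the paper leaves implicit, not departures from its argument.
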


\begin{proof}
Fix $\delta>0$, and let $\epsilon>0$ be given by Theorem 4.1. Then there exists
$v\in C^{1,\alpha_0}(\mathrm{B}^+_{3/4})$ such that
\[
\begin{cases}
F(D^2v)=0 & \text{in } \mathrm{B}^+_{\frac{3}{4}},\\
\beta_0\cdot Dv=0 & \text{on } \mathrm{T}_{\frac{3}{4}},
\end{cases}
\]
and
\[
\|u-v\|_{L^\infty(\Omega_{1/2})}\le \delta.
\]
Let $\tilde \ell(x) := v(0)+Dv(0)\cdot x$. By the $C^{1,\alpha_0}$ estimates in {\bf(H2)}, we have
\[
|v(x)-\tilde \ell(x)| \le \mathrm{C}_\star |x|^{1+\alpha_0}
\quad \text{and} \quad
\beta_0\cdot Dv(0)=0,
\quad
|Dv(0)|\le \mathrm{C}_\star.
\]
Choose $0<r<1/2$ such that
\[
\mathrm{C}_\star r^{1+\alpha_0} \le \frac13 r^{1+\hat{\alpha}},
\]
and set $\delta:=\frac{1}{3} r^{1+\hat{\alpha}}$. Defining $\ell(x):=u(0)+Dv(0)\cdot x$ we obtain, for every
$x\in \Omega_r$,
\[
|u(x)-\ell(x)|
\le |u(x)-v(x)| + |v(x)-\tilde \ell(x)| + |v(0)-u(0)|.
\]
Therefore,
\[
\|u-\ell\|_{L^\infty(\Omega_r)}
\le 2\|u-v\|_{L^\infty(\Omega_r)} + \|v-\tilde \ell\|_{L^\infty(\Omega_r)}
\le 2\delta + \mathrm{C}_\star r^{1+\alpha_0}
\le r^{1+\hat{\alpha}}.
\]
Finally, setting $\vec b:=Dv(0)$, we have $\beta_0\cdot \vec b=0$ and $|\vec b|\le \mathrm{C}_\star$.
This completes the proof.
\end{proof}

Now we are in a position to prove Theorem \ref{main}.

\begin{proof}[\bf Proof of Theorem \ref{main}] 
If $u\equiv 0$, the conclusion is trivial. Otherwise, the strategy is to
construct a sequence of affine functions $\ell_k(x)=a_k+\vec b_k\cdot x$ such that
\begin{itemize}
\item [(i)] $  |a_{k+1}-a_k| \le \mathrm{C}_1 r^{(1+\alpha^{\prime})k}, \,\, \|\vec{\mathfrak{b}}_{k+1} - \vec{\mathfrak{b}}_{k}\| \leq  r^{k\alpha^{\prime}}$,
\item [(ii)] $\beta_0 \cdot \vec b_k =0$,
\item [(iii)] $\|u - \ell_{k}\|_{L^{\infty}(\Omega_{r^k})} \leq \mathrm{C}_{\star} r^{k(1+\alpha^{\prime})}$,
\end{itemize}
where $0<r<1/2$ is given by Lemma \ref{key 1} and
\[
\alpha'=\min\left\{\alpha_0^-,\frac{1}{1+\gamma}\right\}.
\]
The proof is carried out in three steps. We first reduce the problem to a normalized framework. Then, we build an iterative affine approximation scheme based on Lemma \ref{key 1}. Finally, we conclude the desired $C^{1,\alpha'}$ estimate by a standard compactness and covering
argument.
\begin{enumerate}
\item[\textbf{Step 1.}] \textbf{Reduction of the problem}\\
Let $r,\epsilon >0$ be as in Lemma \ref{key 1}. By scaling and normalization, we may assume that $\|u\|_{L^\infty(\Omega_1)}\le 1$, $u(0)=0$, and that $u$ satisfies
$$
\left\{
\begin{array}{rclcl}
\mathcal{G}(D^{2}u,Du-\vec{q},x) &=& f(x) & \mbox{in} & \Omega_1, \\
      && \\
\mathcal{B}(Du,0,x)  &=& g(x) & \mbox{on} & \partial \Omega_1.
    \end{array}
    \right.
$$	
for some $\vec q\in\mathbb R^n$, with $g(0)=0$, $\psi(0)=0$, and $D\psi(0)=0$, and such that
 $$
\|f\|_{L^{\infty}(\Omega_{1})} + \|g\|_{C^{0,\alpha}(\mathrm{T}_{1})} + \|\beta -\beta_0\|_{C^{0,\alpha}(\mathrm{T}_{1})} + \|\varrho\|_{L^{\infty}(\Omega_{1})} (|\vec q|^{(\sigma-\gamma)_+}+1) + \|\psi\|_{C^1(\mathrm{T}_{1})}\le \epsilon.
$$
In fact, consider the rescaled function $\tilde{u}(x):=\frac{u(sx)}{K}$, with $s\in(0,1)$ and $1<K<\infty$, where $s$ and $K$ will be chosen appropriately later. A direct computation shows that $\tilde{u}$ satisfies
$$
 \left\{
    \begin{array}{rclcl}
\tilde{\mathcal{G}}(D^{2}\tilde{u},D\tilde{u},x) &=& \tilde{f}(x) & \mbox{in} & \left(\frac{1}{s}\Omega\right)\cap \mathrm{B}_1, \\
      && \\
\tilde{\beta}\cdot \tilde{u} &=& g_1(x) & \mbox{on} & \partial \left( \frac{1}{s}\Omega \right) \cap \mathrm{B}_1.
    \end{array}
    \right.
$$	
where
\[
\left\{
\begin{aligned}
\tilde{\mathcal{G}}(\mathrm{M},\vec{\xi},x)&:= |\vec\xi|^{\gamma}\frac{s^2}{K} F\left(\frac{K}{s^2}\mathrm{M}\right)+\tilde{\varrho}(x)|\vec\xi|^{\sigma},\\
\tilde{\varrho}(x) &= \frac{s^{2+\gamma-\sigma}}{K^{1+\gamma-\sigma}}\varrho(sx),\\
\tilde{f}(x) &= \frac{s^{2+\gamma}}{K^{1+\gamma}}f(sx),\\
\tilde{\beta}(x) &:= \beta(sx),\\
g_1(x)&= \frac{s}{K}g(sx).
\end{aligned}
\right.
\]
Now, fix $\nu_{\star}>0$, to be chosen sufficiently small. After a suitable change of
coordinates, we may assume that $\Omega_1$ is locally represented as the graph of a $C^1$
function $\psi_{\Omega}$ satisfying $\psi_{\Omega}(0)=0$ and $D\psi_{\Omega}(0)=0$.
Since $\psi_{\Omega}\in C^1$, there exists $0<\tilde{s}\ll1$, depending only on the $C^1$ modulus
of $\psi_{\Omega}$, such that
\[
|D\psi_{\Omega}(x)|\le \nu_{\star}
\qquad\text{for all } |x|\le \tilde{s}.
\]
Set
\[
\tilde{\psi}(x):=\psi_{\frac{1}{\tilde{s}}\Omega}(x)=\frac{\psi(\tilde{s}x)}{\tilde{s}},
\]
for $s\leq \tilde{s}$. Then, $\|\tilde{\psi}\|_{C^1(\mathrm{T}_{s^{-1}})}\le \nu_{\star}$ and
\[
\|\tilde{\beta}-\beta_0\|_{C^{0,\alpha}(\mathrm{T}_{s^{-1}})}
\le s^{\alpha}\|\beta-\beta_0\|_{C^{0,\alpha}(\mathrm{T}_{1})}.
\]
Accordingly, we choose $s\in(0,1)$ such that $s\le \tilde{s}$,
\[
\|\tilde{\psi}\|_{C^1(\mathrm{T}_{s^{-1}})}\le \nu_{\star}\le \frac{\epsilon}{5}
\qquad\text{and}\qquad
\|\tilde{\beta}-\beta_0\|_{C^{0,\alpha}(\mathrm{T}_{s^{-1}})}
\le \frac{1-r^{\alpha}}{2\mathrm{C}_{\star}}\nu_{\star}\le \frac{\epsilon}{5},
\]
where $\epsilon$ is the constant given in Lemma \ref{key 1}. Thus,
\begin{itemize}
\item For $0<\sigma<1+\gamma$, we set
$$
K \colon= 4 \left[\|u\|_{L^{\infty}(\Omega_{1})} + (2\nu_{\star}^{-1}\|\varrho\|_{L^{\infty}(\Omega_{1})})^{\frac{1}{1+\gamma-\sigma}}+\mu^{-1}_0 \nu_{\star}^{-1}\left(\|f\|^{\frac{1}{1+\gamma}}_{L^{\infty}(\Omega_{1})}+ \|g\|_{C^{0,\alpha}(\mathrm{T
}_{1})}\right) \right].
$$	
\item In the linear situation $\sigma=1+\gamma$, we choose $s\in(0,1)$ fulfilling the conditions stated above and, in addition, such that 
\[
s\leq \left(\frac{\nu_{\star}}{2\|\varrho\|_{L^{\infty}(\Omega_{1})}}\right)^{\frac{1}{2+\gamma-\sigma}},
\]
whenever $\varrho\neq 0$ (in the remaining case, the optimal $C^{1,\alpha}$ estimates are already available; see, for instance, \cite{BesOh26}and \cite{BesRicSil26}). Furthermore, we set
$$
K \colon= 4 \left(\|u\|_{L^{\infty}(\Omega_{1})} + \mu^{-1}_0 \nu_{\star}^{-1}\left(\|f\|^{\frac{1}{1+\gamma}}_{L^{\infty}(\Omega_{1})}+ \|g\|_{C^{0,\alpha}(\mathrm{T
}_{1})}\right)\right).
$$	
\end{itemize}
With these choices, we have $\|\tilde{u}\|_{L^{\infty}((s^{-1}\Omega)_{1})} \le \frac{1}{4}$,
$\|\tilde{f}\|_{L^{\infty}((s^{-1}\Omega)_{1})} \le \nu_{\star}\le \frac{\epsilon}{5}$,
$\|\tilde{\varrho}\|_{L^{\infty}((s^{-1}\Omega)_{1})}\leq \frac{\nu_{\star}}{2}\leq \frac{\epsilon}{10}$,
and
\[
\|g_1\|_{C^{0,\alpha}(\mathrm{T}_{s^{-1}})} \le \frac{\mu_0}{4} \nu_{\star} \le \frac{\epsilon}{5}.
\]
Now, define $\tilde{v}(x)= \tilde{u}(x)-\frac{\tilde{g}(0)}{\tilde{\beta}_n(0)} x_n -\tilde{u}(0)$
and $\tilde{q} =-\frac{g_1(0)}{\tilde{\beta}_n(0)} e_n$. Then $\tilde{v}$ satisfies
$$
\left\{
\begin{array}{rclcl}
\tilde{G}(D^{2}v,D\tilde{v}- \tilde{q},x)&=& \tilde{f}(x) & \mbox{in} & \tilde{\Omega}_1 =\left(\frac{1}{s}\Omega\right)\cap \mathrm{B}_1, \\
      && \\
\tilde{\mathcal{B}}(Dv,0,x)&=& \tilde{g}(x) & \mbox{on} & \partial \tilde{\Omega_1}=\partial \left( \frac{1}{s}\Omega \right) \cap \mathrm{B}_1.
    \end{array}
    \right.
$$	
where $\tilde{\mathcal{B}}(\vec{\xi},0,x)=\beta(x)\cdot \vec{\xi}$ and $\tilde{g}=g_1-\frac{g_1(0)}{\tilde{\beta}_n(0)} \tilde{\beta}_n$. Note that
$\tilde{v}(0)=0$, $\tilde{g}(0)=0$, and
\[
\|\tilde{v}\|_{L^{\infty}((\Omega_{s^{-1}})_{1})}
\le 2 \|\tilde{u}\|_{L^{\infty}((\Omega_{s^{-1}})_{1})} +\frac{|g_1(0)|}{\mu_0}
\le 1.
\]
Moreover,
\begin{eqnarray*}
\|\tilde{g}\|_{C^{0,\alpha}(\mathrm{T}_{s^{-1}})}
&\le&
\|g_1\|_{C^{0,\alpha}(\mathrm{T}_{s^{-1}})}
+\frac{|g_1(0)|}{\mu_0}\|\tilde{\beta}_n\|_{C^{0,\alpha}(\mathrm{T}_{s^{-1}})}
\le \frac{\nu_{\star}}{2}\le \frac{\epsilon}{5}.
\end{eqnarray*}	

Next, we choose $\nu_{\star}>0$ sufficiently small so that
$$
\nu_{\star} \cdot \left[\left(\frac{|g_1(0)|}{\mu_0}\right)^{\sigma-\gamma} +1 \right]
\le \frac{\epsilon}{5}.
$$

In what follows, we estimate
$$
\|\tilde{\varrho}\|_{L^{\infty}((\Omega_{s^{-1}})_{1})} \cdot (|\tilde{q}|^{(\sigma-\gamma)_+}+1).
$$
If $0 < \sigma \le \gamma$, then $(\sigma-\gamma)_+ =0$ and thus
\begin{eqnarray}
\|\tilde{\varrho}\|_{L^{\infty}((\Omega_{s^{-1}})_{1})}(|\tilde{q}|^{(\sigma-\gamma)_+}+1)
&\le&
\frac{s^{2+\gamma-\sigma}}{K^{1+\gamma-\sigma}} \|\varrho\|_{L^{\infty}(\Omega_{1})}
\left(\left |\frac{g_1(0)}{\tilde{\beta}_n(0)} \right |^{(\sigma-\gamma)_+}+1\right)
\nonumber\\
&\le&
2 \|\varrho\|_{L^{\infty}(\Omega_{1})}. \label{7}
\end{eqnarray}
If $\gamma < \sigma \le 1+\gamma$, then $(\sigma-\gamma)_+ = \sigma-\gamma$, and
\begin{eqnarray}
\|\tilde{\varrho}\|_{L^{\infty}((\Omega_{s^{-1}})_{1})}(|\tilde{q}|^{(\sigma-\gamma)_+}+1)
&\le&
\frac{s^{2+\gamma-\sigma}}{K^{1+\gamma-\sigma}} \|\varrho\|_{L^{\infty}(\Omega_{1})}
\left(\left |\frac{g_1(0)}{\tilde{\beta}_n(0)} \right |^{\sigma-\gamma}+1\right)
\nonumber\\
&\le&
\|\varrho\|_{L^{\infty}(\Omega_{1})}
\left[\left(\frac{|g_1(0)|}{\mu_0}\right)^{\sigma-\gamma} +1 \right]
\nonumber\\
&\le&
2\|\varrho\|_{L^{\infty}(\Omega_{1})}. \label{8}
\end{eqnarray}
Therefore, combining \eqref{7} and \eqref{8} with the fact that
$\|\varrho\|_{L^{\infty}(\Omega_{1})} \le \frac{\nu_{\star}}{2}$, we obtain
$$
\|\tilde{\varrho}\|_{L^{\infty}((\Omega_{s^{-1}})_{1})} \cdot (|\tilde{q}|^{(\sigma-\gamma)_+}+1)
\le \frac{\epsilon}{5}.
$$ 
This completes \textbf{Step 1}.
\item[\textbf{Step 2.}] \textbf{Iterative affine approximation scheme}
\\
In this step, we show that there exists a sequence of affine functions
$\ell_k(x)=\vec{\mathfrak{b}}_{k}\cdot x$ such that:
\begin{itemize}
\item [i.] $  \|\vec{\mathfrak{b}}_{k+1} - \vec{\mathfrak{b}}_{k}\| \leq  r^{k\alpha^{\prime}}$,
\item [ii.] $\beta_0 \cdot \vec b_k =0$,
\item [iii.] $\|u - \ell_{k}\|_{L^{\infty}(\Omega_{r^k})} \leq \mathrm{C}_{\star} r^{k(1+\alpha^{\prime})}$,
\end{itemize}
where $r$ is the constant from Lemma \ref{key 1} and
$$
\alpha^{\prime} = \min\left\{\alpha_0^{-}, \frac{1}{1+\gamma}\right\}.
$$
We argue by induction. The case $k=0$ follows directly from Lemma \ref{key 1} with
$\ell_0 \equiv 0$. Assume now that the statement holds for some $k \in \mathbb{N}$, and define
the rescaled function
$$
w_{k}(x) = \frac{(u - \ell_{k})(r^{k}x)}{r^{k(1+\alpha^{\prime})}}.
$$
By the induction hypothesis, $w_{k}$ is a normalized solution of
$$
\left\{
\begin{array}{rclcl}
\mathcal{G}_{k}(D^{2}w_{k},Dw_{k}+\vec{q}_{k},x)&=& f_{k}(x) & \text{in} & \left( \frac{1}{r^k} \Omega\right) \cap \mathrm{B}_1, \\
&& &&\\
\beta_{k} \cdot D w_{k} &=& g_{k}(x) & \text{on} & \partial \left( \frac{1}{r^k} \Omega\right) \cap \mathrm{B}_1,
\end{array}
\right.
$$
where $\vec q_k := r^{- k \alpha^{\prime}}(b_k -\vec q)$ and
\[
\left\{
\begin{aligned}
\mathcal{G}_{k}(\mathrm{M},\vec{\xi},x)&:= |\vec{\xi}|^{\gamma}r^{k(1-\alpha^{\prime})} F\left(r^{k(\alpha^{\prime}-1)} \mathrm{M} \right)+\varrho_{k}(x)|\vec{\xi}|^{\sigma}, \\
\varrho_k(x) &:= r^{k[1-\alpha^{\prime}(1+\gamma-\sigma)]}\varrho(r^kx),\\
f_{k}(x) &:= r^{k(1-\alpha^{\prime}(1+\gamma))} f(r^{k}x), \\
\beta_{k}(x) &:= \beta(r^{k}x), \\
g_{k}(x) &:= r^{-k\alpha^{\prime}}\left(g(r^{k}x) - \beta(r^{k}x) \cdot \vec{\mathfrak{b}}_{k} \right)
=\tilde{g}_k(x)-\tilde{\beta}_k(x).
\end{aligned}
\right.
\]
with $\tilde{g}_k(x)=r^{-k \alpha^{\prime}}g(r^k x)$ and
$\tilde{\beta}_k(x) = r^{-k \alpha^{\prime}} \beta(r^k x)\cdot b_k$. Since $r < 1$, it follows that
$$
\|f_{k}\|_{L^{\infty}((\Omega_{r^{-k}})_{1})}
\leq r^{k(1-\alpha^{\prime}(1+\gamma)} \|f\|_{L^{\infty}(\Omega_{1})}
\leq \epsilon,
$$
since $\alpha^{\prime} \leq \frac{1}{1+\gamma}$. Moreover, since $g(0)=0$, we obtain
\begin{eqnarray*}
\|\tilde{g}_k\|_{L^{\infty}(\mathrm{T}_{r^{-k}})}
&\le&
\|\tilde{g}_k\|_{C^{0,\alpha}(\mathrm{T}_{1})}
\le \frac{\epsilon}{2}. 	
\end{eqnarray*}
Now, using $\beta_0 \cdot b_k =0$ and $|\vec b_k|\le \frac{\mathrm{C}_{\star}}{1-r^{\alpha}}$, we obtain
$$
\|\tilde{\beta}_k\|_{L^{\infty}(\mathrm{T}_{r^{-k}})}
=
\left \|  r^{-k \alpha^{\prime}}(\beta(r^k x) -\beta_0) \cdot b_k \right\|_{L^{\infty}(\mathrm{T}_{r^{-k}})}
\le |b_k| \cdot \|\beta-\beta_0\|_{C^{0,\alpha}(\mathrm{T}_{1})}
\le \frac{\epsilon}{2},
$$
which yields
$$
\|g_k\|_{L^{\infty}(\mathrm{T}_{r^{-k}})}
\le \|\tilde{g}_k\|_{L^{\infty}(\mathrm{T}_{r^{-k}})} + \|\tilde{\beta}_k\|_{L^{\infty}(\mathrm{T}_{r^{-k}})}
\le \frac{\epsilon}{2} + \frac{\epsilon}{2}
=\epsilon.
$$  
Moreover,
$$
\|\beta_k -\beta_0\|_{C^{0,\alpha}(\mathrm{T}_{r^{-k}})}
\le \|\beta- \beta_0\|_{C^{0,\alpha}(\mathrm{T}_{1})}
\le \epsilon.
$$
In addition, let $\psi_k(x)\colon= \psi_{\frac{1}{r^k}\Omega}(x)$. Then $\psi_k(0)=0$,
$D \psi_k(0)=0$, and $\|\psi_k\|_{C^1((\mathrm{T}_{r^{-k}}))} \le \epsilon$. Therefore, we are
under the assumptions of Lemma \ref{key 1}, and hence there exists a linear function
$\tilde{\ell}=\vec b \cdot x$ such that
$$
\|w_k-\tilde{\ell}\|_{L^{\infty}(\frac{1}{r^k} \Omega \cap \mathrm{B}_r)}
\le r^{1+\alpha}, \,\,\,\, \beta_0 \cdot \vec b =0 \,\,\, \textrm{and} \,\,\,\, |\vec b| \le \mathrm{C}_{\star}.
$$
Thus, defining
$$
\ell_{k+1}(x)=\ell_k(x) + r^{k(1+\alpha)}\tilde{\ell}(r^{-k} x),
$$
we conclude that $\ell_{k+1}$ satisfies conditions i), ii), and iii) with $k$ replaced by $k+1$.
This completes \textbf{Step 2}.
\item[\textbf{Step 3.}] \textbf{Optimal regularity}\\ 
The estimates obtained in \textbf{Step 2} imply that the sequence $(\ell_k)_{k\in\mathbb{N}}$
converges to an affine function $\ell_{\infty}=\lim_{k\to +\infty}\ell_k$, which provides the
first-order expansion of $u$ at $0$. In particular, the decay estimate in item iii) yields that
$u$ is $C^{1,\alpha^{\prime}}$ at $0$. Finally, a standard covering argument allows us to conclude that
$u \in C^{1,\alpha^{\prime}}(\overline{\Omega_{1/2}})$.
\end{enumerate}
\end{proof}

We now prove Corollary \ref{main1} as a direct consequence of Theorem \ref{main}.

\begin{proof}[\bf Proof of Corollary \ref{main1}]
Let $u$ be a viscosity solution of \eqref{2.1}. Define
\[
\bar{g}(x):= g(x)-\zeta(x)u(x) \qquad \text{for } x\in \partial\Omega_1.
\]
Then $u$ also solves
\begin{equation}\label{prevpro}
\left\{
\begin{array}{rclcl}
\mathcal{G}(D^{2}u,Du,x) &=& f(x)& \mbox{in} &   \Omega_1  \\
 \mathcal{B}(Du,0,x)&=& \bar{g}(x) &\mbox{on}& \partial \Omega_1,
\end{array}
\right.	
\end{equation}
By Theorem \ref{Holderregtransl}, we have $u\in C^{0,\alpha}$ up to the boundary, which implies in particular
that $\bar{g}\in C^{0,\alpha}(\partial\Omega_1)$. Therefore, we can apply Theorem \ref{main} to the problem \eqref{prevpro} and obtain the desired regularity result $u$. This completes
the proof.
\end{proof}

\begin{remark}
The compactness framework developed in Section \ref{Section3}, together with the boundary improvement-of-flatness scheme in Section \ref{Section4}, provides a flexible approach that can be adapted to other degenerate fully
nonlinear models with lower order terms and oblique boundary conditions.
\end{remark}
%%%%%%%%%%%%%%%%%%%%%%%%%%%%%%%%%%%%%%%%%%%%%%%%%%%%%%%%%%% 

\subsection*{Acknowledgments}

\hspace{0.4cm} J. da Silva Bessa has been supported by FAPESP-Brazil under Grant No. 2023/18447-3. G.C. Ricarte has been partially supported by
CNPq-Brazil under Grants No. 304239/2021-6.

\end{document}